\newenvironment{subproof}[1][\proofname]{%
  \begin{proof}[#1]%
}{%
  \end{proof}%
}
\newtheorem{THM}{Theorem}[section]
\newtheorem*{THM*}{Theorem~\ref{main}}
\newtheorem*{THMTH*}{Theorem~\ref{main-threshold}}
\newtheorem*{THMJOIN}{Theorem~\ref{thm:join}}
\newtheorem*{THMSUB}{Theorem~\ref{sub}}
\newtheorem*{THMTWIN}{Theorem~\ref{twin}}
\newtheorem*{THMTWIN2}{Theorem~\ref{thm:twin2}}
\newtheorem*{CORCO}{Corollary~\ref{cor:cographs}}
\newtheorem{LEM}[THM]{Lemma}
\newtheorem{COR}[THM]{Corollary}
\newtheorem{CLAIM}{Claim}
\theoremstyle{remark}
\newcommand\abs[1]{\lvert #1\rvert}
\theoremstyle{definition}
\newtheorem{DEFN}[THM]{Definition}
\begin{document}
\title{Strongly even-cycle decomposable graphs}
\author{Tony Huynh}
\author{Andrew D. King}
\author{Sang-il Oum}
\author{Maryam Verdian-Rizi}
\address[Tony Huynh]{Département de Mathématique, Université libre de Bruxelles, Boulevard du Triomphe, B-1050 Brussels, Belgium
}
\address[Sang-il Oum and Maryam Verdian-Rizi]{Department of Mathematical Sciences, KAIST, 291 Daehak-ro
  Yuseong-gu Daejeon, 34141 South Korea}
\address[Andrew D. King]{Department of Mathematics, Simon Fraser
  University, 8888 University Drive, Burnaby, BC, V5A 1S6, Canada}
\email{tony.bourbaki@gmail.com}
\email{andrew.d.king@gmail.com}
\email{sangil@kaist.edu}
\email{mverdian@gmail.com}
\thanks{T.~H., S.~O., and M.~V.-R. are supported by Basic Science Research
  Program through the National Research Foundation of Korea (NRF)
  funded by the Ministry of Science, ICT \& Future Planning
  (2011-0011653).  T.~H. is also supported by the European Research Council under the European Union's Seventh Framework Programme (FP7/2007-2013)/ERC Grant Agreement no. 279558. A.~D.~K. is supported by an EBCO/Ebbich Postdoctoral Scholarship and the NSERC Discovery Grants of Pavol Hell and Bojan Mohar.}
\date{\today}

\begin{abstract}
    A graph is \emph{strongly even-cycle decomposable} if the edge set of every subdivision with an even number of edges can be partitioned into cycles of even length. We prove that several fundamental composition operations that preserve the property of being Eulerian also yield strongly even-cycle decomposable graphs.  As an easy application of our  theorems, we give an exact characterization of the set of strongly even-cycle decomposable cographs.
\end{abstract}

\keywords{cycle, even-cycle decomposition, Eulerian, cograph}

\maketitle

\section{Introduction}\label{sec:intro}
A graph $G$ is \emph{even-cycle decomposable} if the edge set of $G$ can be partitioned into cycles of even length.  Even-cycle decomposable graphs have been the subject of substantial attention.  For a summary of relevant results we refer  the reader to the surveys of Jackson~\cite{jackson} or Fleischner~\cite{fleischner}.  
In addition, Huynh, Oum and Verdian-Rizi~\cite{oddk5} recently investigated even-cycle decomposable graphs with respect to odd minors.

In this paper we instead focus on a stronger decomposition property.
Namely, we define a graph $G$ to be \emph{strongly even-cycle decomposable} if every subdivision of $G$ with an even number of edges is even-cycle decomposable.  
Note that a strongly even-cycle decomposable graph $G$ with $\abs{V(G)}\ge 3$ and no isolated vertices is necessarily Eulerian, loopless and $2$-connected.
For us, an \emph{Eulerian graph} will always mean a (not necessarily connected) graph in which all vertex degrees are even. An \emph{anti-Eulerian graph} is a graph in which every vertex has odd degree.

One motivation for introducing strongly even-cycle decomposable graphs is that inductive arguments tend to work more smoothly for strongly even-cycle decomposable graphs as opposed to even-cycle decomposable graphs.  For this reason, sometimes the easiest way to prove that a graph is even-cycle decomposable is to prove that it is strongly even-cycle decomposable.  In addition, there are indeed interesting classes of graphs that are strongly even-cycle decomposable.  For example,
Seymour~\cite{planar} proved that planar graphs that satisfy the obvious necessary conditions are strongly even-cycle decomposable.

\begin{THM}[Seymour \cite{planar}] \label{planar}
Every loopless $2$-connected Eulerian planar graph is strongly even-cycle decomposable.
\end{THM}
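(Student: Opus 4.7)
The plan is to exploit the face structure of a planar embedding. Let $H$ be a subdivision of $G$ with $|E(H)|$ even; $H$ remains loopless, $2$-connected, Eulerian, and planar, since each of these properties is preserved by subdivision. Fix a plane embedding of $H$. Because $H$ is Eulerian, every edge cut of $H$ has even size, so every cycle of the planar dual $H^{*}$ has even length and $H^{*}$ is bipartite. Consequently, the faces of $H$ admit a proper $2$-coloring by black and white. Since $H$ is $2$-connected, every face boundary is a cycle, and each edge of $H$ lies on the boundary of exactly one black face and one white face; therefore the boundaries of the black faces already form a cycle decomposition of $E(H)$.

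Call a black face \emph{odd} if its boundary has odd length; the goal is to rearrange this cycle decomposition so that no odd cycle remains. Since $|E(H)| = \sum_{b\text{ black}} |\partial b|$ is even, the number of odd black faces is even. Pair them up and, for each pair $b_1,b_2$, choose a shortest path from $b_1$ to $b_2$ in the dual $H^{*}$. Walking along this path one edge at a time, perform a local \emph{swap} that reassigns shared primal edges between the adjacent black- and white-face cycles in the current decomposition. A single swap flips the parities of the two affected face cycles and leaves the others unchanged; composing the swaps along the dual path converts the two original odd cycles into even cycles, and iterating over all pairs eliminates every odd cycle.

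The main obstacle is proving that each local swap produces a valid cycle decomposition. In general the symmetric difference of two cycles is only an even subgraph, so additional work is needed to show that merging two face boundaries sharing a single primal edge yields a single cycle rather than an edge-disjoint union of closed walks. This is exactly where $2$-connectedness and planarity are essential: in a $2$-connected plane graph, the merge of two face boundaries across one shared edge bounds a single topological region, and hence its edge boundary is a single cycle. Once this is verified at each step, a straightforward induction on the number of odd face cycles concludes the proof.
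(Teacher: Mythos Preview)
The paper does not prove this theorem; it is quoted as a result of Seymour with a citation and then immediately superseded by Zhang's $K_5$-minor-free generalization, so there is no in-paper argument to compare your sketch against. I can therefore only comment on the soundness of the sketch itself.

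The opening moves are fine (pass to the subdivision $H$, use Eulerianness to get a bipartite dual, take the black-face boundaries as an initial cycle decomposition), but the ``swap'' step hides a real gap. First, the operation is underspecified: white-face boundaries are not members of the current decomposition, so ``reassigning shared primal edges between the adjacent black- and white-face cycles in the current decomposition'' does not name two cycles of the decomposition to modify. Second, and more seriously, your justification that the merge of two adjacent face boundaries is a single cycle because it ``bounds a single topological region'' already fails for $2$-connected loopless Eulerian plane graphs. Two faces may share more than one edge, and then gluing the two open faces along just one shared edge is not simply connected. For instance, take vertices $a,b,c,d$ with three parallel $ab$-edges, three parallel $cd$-edges, and single edges $bc$ and $da$; in the obvious embedding the inner $4$-face and the outer face share the two non-contiguous edges $bc$ and $da$, and the symmetric difference of their boundaries is a disjoint pair of digons, not one cycle. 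Third, even when the first merge does yield a single cycle, that cycle is no longer a face boundary, so your topological reason does not transfer to the next swap along the dual path; you would need to maintain an invariant that every cycle in the evolving decomposition bounds a disc and that consecutive cycles along the path meet in a single arc, and you have not established this. These are precisely the difficulties a complete proof must handle; ``once this is verified'' is where the content lives, not after it.
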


Theorem~\ref{planar} was generalized by Zhang~\cite{nok5} to graphs having no $K_5$-minor.

\begin{THM}[Zhang \cite{nok5}] \label{noK5}
Every loopless $2$-connected Eulerian $K_5$-minor-free graph is strongly even-cycle decomposable.
\end{THM}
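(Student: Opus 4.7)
The plan is to reduce to Theorem~\ref{planar} via Wagner's structure theorem for $K_5$-minor-free graphs, combined with composition (clique-sum) reductions that preserve the relevant properties. More precisely, I would prove the equivalent statement that every loopless 2-connected Eulerian $K_5$-minor-free graph with an even number of edges admits an even-cycle decomposition; subdividing an edge inserts a degree-$2$ vertex and does not introduce a $K_5$-minor, so the class of 2-connected Eulerian $K_5$-minor-free graphs is closed under subdivision, which means this formulation already yields the strong form. The induction is on $|V(G)| + |E(G)|$, with trivial base cases such as even cycles.

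For the inductive step I first dispose of the highly connected case. If $G$ is 4-connected, then by Wagner's theorem every 4-connected $K_5$-minor-free graph is either planar or isomorphic to the Wagner graph $V_8$; since $V_8$ is cubic and hence not Eulerian, $G$ must be planar, and Theorem~\ref{planar} applies directly. Otherwise $G$ has a vertex cut of size at most $3$. If $\{u,v\}$ is a $2$-cut with $G = G_1 \cup G_2$ and $V(G_1)\cap V(G_2)=\{u,v\}$, I augment each $G_i$ by a $u$--$v$ path $P_i$ whose length is chosen (using the parities of $\deg_{G_i}(u)$ and $\deg_{G_i}(v)$, which are constrained by the global Eulerian assumption) so that each augmented graph $G_i^+$ is a 2-connected Eulerian $K_5$-minor-free graph with an even number of edges. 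The inductive hypothesis, in its strong form, yields even-cycle decompositions of $G_1^+$ and $G_2^+$; the unique cycle of each decomposition containing $P_i$ can then be spliced along $P_1,P_2$ to produce a single even cycle of $G$, and the remaining cycles combine to give an even-cycle decomposition of $G$. A $3$-cut $\{u,v,w\}$ is handled analogously by adding to each side a suitable short tree or triangle on $\{u,v,w\}$ whose edge parities are coordinated between the two sides.

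The main obstacle is the parity bookkeeping at the cut, and especially the $3$-cut reduction, where several sub-cases arise according to how the degrees of $u,v,w$ distribute between $G_1$ and $G_2$ and according to how the three cut vertices are joined inside each side. It is precisely here that the strong (subdivision-level) formulation of the induction hypothesis is essential: the augmenting paths introduced on either side of the cut make the pieces into subdivisions of smaller 2-connected Eulerian $K_5$-minor-free graphs rather than into such graphs themselves, so without the strong form one could not invoke induction. The clean statement of this splicing step is what makes it natural to work at the level of the stronger property, and it is exactly the style of composition lemma highlighted in the abstract of this paper.
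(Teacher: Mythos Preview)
The paper does not prove Theorem~\ref{noK5}. It is quoted from Zhang~\cite{nok5} and, together with Seymour's Theorem~\ref{planar}, serves only as motivation and as a source of base classes for the paper's own constructions; no argument for it appears anywhere in the text. There is therefore no ``paper's own proof'' to compare your proposal against.

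Your outline --- reduce to Seymour's planar case via Wagner's structure theorem and small separations --- is the natural approach and is in the spirit of Zhang's original argument. Two remarks on the sketch itself. First, $V_8$ is only $3$-connected, so in the $4$-connected case the conclusion is simply that $G$ is planar; this is cosmetic. Second, and more substantively, the parity bookkeeping you flag as ``the main obstacle'' is a genuine gap rather than a routine detail. Already at a $2$-cut $\{u,v\}$ both sides $G_1,G_2$ can be Eulerian yet each have an odd number of edges (take two $5$-cycles sharing two non-adjacent vertices, so $\abs{E(G)}=10$). Augmenting such a $G_i$ by a single $u$--$v$ path of any positive length makes the degrees of $u$ and $v$ odd, so $G_i^+$ is no longer Eulerian and you cannot invoke the induction hypothesis as stated; a more elaborate augmentation/splicing scheme is required, and this is precisely where the work in~\cite{nok5} lies. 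The $3$-cut case, which you leave entirely to ``a suitable short tree or triangle'', is harder still. Finally, the composition operations developed in the present paper (odd expansion, adding apex vertices, substitution, twin substitution, joins) are not clique-sum reductions, so your closing sentence conflates two unrelated toolkits.
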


We note that $K_5$ is an example where the obvious necessary conditions are not sufficient.  To see this, observe that $K_5$ has $10$ edges but every even-length cycle of $K_5$ is a $4$-cycle.  Thus, $K_5$ is not even-cycle decomposable (and hence not strongly even-cycle decomposable).  Markstr\"om~\cite{markstrom} recently gave a construction for an infinite class of $4$-regular $2$-connected graphs that are not even-cycle decomposable; the construction is based on a gadget that places $K_5$ in an edge.

In this paper we prove that several fundamental composition operations that preserve the property of being Eulerian also yield strongly even-cycle decomposable graphs.  
Our main composition operations (from which the others can be derived) are the following.  Definitions are deferred until later.

\begin{THMSUB}
    Let $G$ be a simple strongly even-cycle decomposable graph, let $v$ be a non-isolated vertex of $G$, and let $H$ be a simple Eulerian graph with an odd number of vertices.  Then the substitution of $v$ by $H$ in $G$ is strongly even-cycle decomposable, provided that  $H$ is not $K_3$ or $\deg_G(v)\ge 4$.
\end{THMSUB}
\begin{figure}[t]
  \centering
  \tikzstyle{every node}=[circle,draw,fill=black!50,inner sep=0pt,minimum width=4pt]
  \begin{tikzpicture}[scale=.75]
    \draw (90:1) node {} -- (210:1) node {} -- (330:1) node {}
    --cycle;
    \draw (0,0) node{};
  \end{tikzpicture}
  \caption{A co-claw.}
  \label{fig:coclaw}
\end{figure}
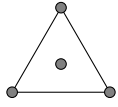

\begin{THMTWIN}
    Let $G$ be a simple strongly even-cycle decomposable graph and let $u$ and $v$ be non-isolated, non-adjacent twin vertices of $G$. If $H$ is a simple Eulerian graph with an even number of vertices, then the twin substitution of $\{u,v\}$ by $H$ in $G$ is also strongly even-cycle decomposable, provided that $H$ is not a co-claw or $\deg_G(v)\ge 4$.
\end{THMTWIN}

\begin{THMTWIN2}
    Let $G$ be a simple strongly even-cycle decomposable graph and let $u$ and $v$ be adjacent twin vertices of $G$ having degree at least $4$. If $H$ is a simple anti-Eulerian graph, then the twin substitution of $\{u,v\}$ by $H$ in $G$ is also strongly even-cycle decomposable.
\end{THMTWIN2}

\begin{THMJOIN}
    Let $G$ be a simple Eulerian graph that is a join of two graphs each having at least two vertices. Then $G$ is strongly even-cycle decomposable if and only if $G$ is neither $K_5$ nor $K_5$ with an edge subdivided.
\end{THMJOIN}
See Figure~\ref{fig:coclaw} for a co-claw and Figure~\ref{fig:k5sub} for $K_5$ with an edge subdivided.
These theorems suggest that $K_5$ is essentially the unique obstruction to the strongly even-cycle decomposable property.  In particular, we obtain the following corollary as an easy application of our 
composition theorems. A \emph{cograph} is a simple graph with no induced path of length $3$. Here is an exact characterization of the set of strongly even-cycle
decomposable cographs.

\begin{figure}
\centering
   \begin{tikzpicture}[scale=.75]
    \tikzstyle{v}=[circle,draw,fill=black!50,inner sep=0pt,minimum width=4pt]
    \foreach \x in {1,2,3,4,5} {
        \node[v] at (360*\x/5+90:1) (v\x) {};
    }
    \foreach \x in {1,2,3,4} {
        \foreach \y in {\x,...,5} {
            \draw (v\x)--(v\y);
    }   
    }
    \node [v] at ($(v2)!.5!(v3)$)   {};
    \end{tikzpicture}
\caption{$K_5$ with an edge subdivided.}
\label{fig:k5sub}
\end{figure}
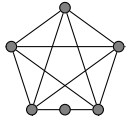
\begin{CORCO}
    Let $G$ be a cograph with no isolated vertices. Then $G$ is strongly even-cycle decomposable if and only if $G$ is $2$-connected Eulerian and $G$ is neither $K_5$ nor $K_5$ with an edge subdivided.
\end{CORCO}

The rest of the paper is organized as follows.  In Section~\ref{sec:sign}, we define \emph{signed graphs} and recast our problem in their language.  In Section~\ref{sec:small} we prove that Eulerian complete bipartite graphs are strongly even-cycle decomposable.  In Section~\ref{sec:smallgraphs}, we present several classes of strongly even-cycle decomposable graphs to be used in other sections as base cases. In Sections~\ref{sec:sub},~\ref{sec:twinsub}, and~\ref{sec:join} we introduce our composition operations and prove that they yield strongly even-cycle decomposable graphs.  
Finally, in Section~\ref{sec:cographs} we give a quick derivation of Corollary~\ref{cor:cographs}.

\section{Signed graphs}\label{sec:sign}
A \emph{signed graph} is a pair $(G, \Sigma)$ consisting of a graph $G$ together with a \emph{signature} $\Sigma \subseteq E(G)$.  The edges in $\Sigma$ are \emph{odd} and the other edges are \emph{even}.  We extend this terminology and define a cycle (or path) to be \emph{even} or \emph{odd} according as it contains an even or odd number of odd edges.  To avoid confusion, we will always use the term \emph{even-length cycle} if we need to refer to a cycle with an even number of edges. 
For a graph $G$, a subset of $E(G)$ is called a \emph{signing} of $E(G)$; a signing $\Sigma$ of $E(G)$ induces a signed graph $(G,\Sigma)$.
 
A signed graph $(G, \Sigma)$ is \emph{even-cycle decomposable}, if $E(G)$ can be partitioned into even cycles. 
The relationship between even-cycle decompositions of signed graphs and graphs is as follows.  Let $G$ be a graph and let $H$ be a subdivision of $G$ with $\abs{E(H)}$ even.  Consider the signed graph $\mathcal{G}_H:=(G, \Sigma)$, where $e \in \Sigma$ if and only if the subdivided path in $H$ corresponding to $e$ has an odd number of edges.  Observe that $H$ is even-cycle decomposable if and only if $\mathcal{G}_H$ is even-cycle decomposable.  Thus, we have the following equivalent definition of strongly even-cycle decomposable graphs.

\begin{DEFN}
A graph $G$ is strongly even-cycle decomposable if and only if for each signing $\Sigma$ of $E(G)$ with $\abs{\Sigma}$ even, the signed graph $(G, \Sigma)$ is even-cycle decomposable.
\end{DEFN}

For $X \subseteq V(G)$, we let $\delta_G(X)$ be the set of non-loop edges with exactly one end in $X$.  We say that $\delta_G(X)$ is the \emph{cut induced by $X$}.  Two signatures $\Sigma_1, \Sigma_2 \subseteq E(G)$ are \emph{equivalent} if their symmetric difference is a cut.  Note that signature equivalence is an equivalence relation.  The operation of changing to an equivalent signature is called \emph{re-signing}.  The main reason for working with signed graphs is that if $\Sigma_1, \Sigma_2 \subseteq E(G)$ are equivalent signatures, then $(G, \Sigma_1)$ and $(G, \Sigma_2)$ have exactly the same set of even cycles.
Thus, for equivalent signatures $\Sigma_1$ and $\Sigma_2$, $(G, \Sigma_1)$ is even-cycle decomposable if and only if $(G, \Sigma_2)$ is even-cycle decomposable.   
The \emph{parity} of a vertex $v$ in a signed graph $(G, \Sigma)$ is the parity of
the number of odd non-loop edges incident with $v$.  

We will frequently use the following well-known lemma without explicit reference.

\begin{LEM} \label{forest}
Let $(G, \Sigma)$ be a signed graph.  For any $F \subseteq E(G)$ which does not contain a cycle, there is a signature $\Sigma_F$ which is equivalent to $\Sigma$ such that $\Sigma_F \cap F = \emptyset$.
\end{LEM}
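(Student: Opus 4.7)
The plan is to proceed by induction on $|F|$. The base case $F = \emptyset$ is immediate: take $\Sigma_F := \Sigma$, so $\Sigma_F \cap F = \emptyset$ trivially.

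For the inductive step, suppose $F \ne \emptyset$. Since $F$ is a forest, at least one component of $F$ contains an edge, and such a component has a leaf $u$; let $e = uv$ be the unique edge of $F$ incident with $u$. Apply the inductive hypothesis to $F' := F \setminus \{e\}$ to obtain a signature $\Sigma'$ equivalent to $\Sigma$ with $\Sigma' \cap F' = \emptyset$. If $e \notin \Sigma'$, then $\Sigma_F := \Sigma'$ already avoids $F$. Otherwise, set $\Sigma_F := \Sigma' \triangle \delta_G(\{u\})$; since $\delta_G(\{u\})$ is a cut, $\Sigma_F$ is equivalent to $\Sigma'$, and hence to $\Sigma$ by transitivity of signature equivalence. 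Re-signing at $u$ flips the parities of precisely the non-loop edges incident with $u$, so $e$ becomes even; since $u$ is incident with no other edge of $F$, every edge of $F'$ retains its parity and remains even. Hence $\Sigma_F \cap F = \emptyset$.

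There is no real obstacle here; this is a standard fact about signed graphs, and the only slight care needed is in the leaf-choice (to ensure the star cut at $u$ touches $F$ only at $e$) and in handling loops, which are automatically not in $F$ and are not in any $\delta_G(\{u\})$ by definition. If a non-inductive proof is preferred, one can root each component of $F$ at an arbitrary vertex, let $X$ be the set of vertices $v$ such that the unique $v$-to-root path in its component contains an odd number of $\Sigma$-edges, and verify directly that $\delta_G(X) \cap F = \Sigma \cap F$; then $\Sigma_F := \Sigma \triangle \delta_G(X)$ has the required property.
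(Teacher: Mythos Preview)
Your proof is correct. Note, however, that the paper does not actually supply a proof of this lemma: it is stated as a ``well-known lemma'' and used without proof. So there is no paper proof to compare against. Your inductive argument via re-signing at a leaf is the standard way to establish this fact, and your alternative one-shot argument (rooting each component of $F$ and taking $X$ to be the set of vertices at odd $\Sigma$-distance from their root) is equally standard and perhaps slightly cleaner. Either would serve as the omitted proof.
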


Observe that Lemma~\ref{forest} implies that $(G, \Sigma_1)$ and $(G, \Sigma_2)$ 
have the same set of even cycles if and only if $\Sigma_1$ and $\Sigma_2$ are equivalent.

An \emph{almost even-cycle decomposition} of a signed graph $(G, \Sigma)$ is a partition
of $E(G)$ into cycles, at most one of which is odd.
\begin{LEM}\label{almost}
  Let $G$ be a strongly even-cycle decomposable graph and let $\Sigma$ be a signing of $E(G)$. 
  For each edge $e$ of $G$, there exists an almost even-cycle decomposition $\{C\}\cup \mathcal C$ of $(G,\Sigma)$ such that $e\in E(C)$ and all cycles in $\mathcal C$ are even.
\end{LEM}
\begin{proof}
    We may assume that $\abs{\Sigma}$ is odd.
  Let $\Sigma'=\Sigma\cup\{e\}$ if $e\notin \Sigma$ and $\Sigma'=\Sigma\setminus\{e\}$ otherwise.
  Since 
  $G$ is strongly even-cycle decomposable, 
  $(G,\Sigma')$ has an even-cycle decomposition. Such an 
  even-cycle decomposition corresponds to an almost even-cycle decomposition of $(G,\Sigma)$ whose odd cycle contains $e$.
\end{proof}

\section{Complete bipartite graphs}\label{sec:small}
In this section, we prove that Eulerian complete bipartite graphs are strongly even-cycle decomposable.  In fact, we will need to prove something slightly stronger.  Namely, after removing the edges of a $4$-cycle from an Eulerian complete bipartite graph, the resulting graph is strongly even-cycle decomposable.

Let us write $K_{n,m}-C_4$ to denote a
subgraph of $K_{n,m}$ obtained by deleting the edges of a fixed $4$-cycle
of $K_{n,m}$. We proceed via a sequence of lemmas.

\begin{LEM} \label{basebasecase}
$K_{2,n}$ is strongly even-cycle decomposable for all positive even integers $n$.
\end{LEM}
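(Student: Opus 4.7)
The plan is to exploit the very restricted cycle structure of $K_{2,n}$. Write the bipartition as $\{u_1, u_2\}$ and $\{v_1,\dots,v_n\}$. Each $v_i$ has degree $2$, so the two edges incident to $v_i$ form a length-$2$ path $P_i = u_1 v_i u_2$, and together $P_1,\dots,P_n$ partition $E(K_{2,n})$. Moreover, every cycle of $K_{2,n}$ is a $4$-cycle of the form $P_i \cup P_j$ for some $i \neq j$. Consequently, any even-cycle decomposition of $(K_{2,n},\Sigma)$ is determined by a perfect matching on $\{1,\dots,n\}$, and the $4$-cycle $P_i \cup P_j$ appearing in the decomposition must be even.

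Next I would introduce $o_i \in \{0,1,2\}$, the number of odd edges in $P_i$ under the signing $\Sigma$, and call $P_i$ \emph{odd} if $o_i = 1$ and \emph{even} otherwise. The parity of the $4$-cycle $P_i \cup P_j$ equals $o_i + o_j \bmod 2$, so this $4$-cycle is even precisely when $P_i$ and $P_j$ have the same parity. Hence the task reduces to pairing up the indices $1,\dots,n$ so that each pair consists of two paths of the same parity.

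The counting step that closes the argument: if $k$ is the number of odd paths, then $k \equiv \sum_i o_i = |\Sigma| \equiv 0 \pmod 2$, so $k$ is even; since $n$ is even as well, $n - k$ is even. Therefore one can match the $k$ odd paths among themselves and the $n-k$ even paths among themselves, giving the required partition of $E(K_{2,n})$ into even $4$-cycles.

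There isn't really a main obstacle here—the lemma is essentially a parity-counting exercise once one observes that the only cycles in $K_{2,n}$ are $4$-cycles built from two of the $v_i$-paths. The only thing to watch is to make the argument formally correct with $n$ even (so that $n-k$ is automatically even), which is exactly the Eulerian hypothesis on $K_{2,n}$.
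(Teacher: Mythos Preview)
Your proof is correct and follows essentially the same approach as the paper: both arguments recognize that every cycle of $K_{2,n}$ is a union of two of the paths $P_i$, and then pair paths of equal parity to form even $4$-cycles. The only cosmetic difference is that the paper first re-signs so that all edges at one side of the bipartition are even (making the parity of each $P_i$ visible as the sign of a single edge) and then writes down the decomposition explicitly, whereas you work directly with the path parities $o_i$ without re-signing.
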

\begin{proof}
Consider a signing $\Sigma$ of $E(K_{2,n})$ with $\abs{\Sigma}$ even.  Let $\{u,v\}$ and $\{x_1, \dots, x_n\}$ be the bipartition of $K_{2,n}$.  By Lemma~\ref{forest}, we may assume that all edges incident with $u$ are even.  By re-indexing vertices if necessary, we may assume that there exists an even index $i$ such that the edges $vx_1, \dots, vx_i$ are all odd and all the other edges incident with $v$ are even.  But now,
\[
\{ux_1vx_2, ux_3vx_4, ux_5vx_6, \dots, ux_{n-1}vx_n\}
\]
is an even-cycle decomposition of $(K_{2,n}, \Sigma)$.
\end{proof}

\begin{LEM} \label{basecase}
$K_{4,4}-C_4$ is strongly even-cycle decomposable.
\end{LEM}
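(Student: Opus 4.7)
My plan is to partition $E(G)$ into three edge-disjoint $4$-cycles and reduce most cases to Lemma~\ref{basebasecase}. Labeling so that the removed $4$-cycle is $a_1b_1a_2b_2a_1$, write $E(G)=C_A\cup C_B\cup C_C$, where $C_A$ consists of the four edges between $\{a_1,a_2\}$ and $\{b_3,b_4\}$, $C_B$ of those between $\{a_3,a_4\}$ and $\{b_1,b_2\}$, and $C_C$ of those between $\{a_3,a_4\}$ and $\{b_3,b_4\}$. Each $C_X$ is a $4$-cycle, and the unions $C_A\cup C_C$ and $C_B\cup C_C$ are each copies of $K_{2,4}$.

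For a signing $\Sigma$ of even size, let $\alpha=\abs{\Sigma\cap C_A}$, $\beta=\abs{\Sigma\cap C_B}$, $\gamma=\abs{\Sigma\cap C_C}$. If $\alpha$ is even then $C_A$ is an even $4$-cycle, and after removing $C_A$ the remaining $K_{2,4}$ has even signing $\beta+\gamma$; Lemma~\ref{basebasecase} then finishes the decomposition. The case ``$\beta$ even'' is symmetric. The only remaining case is $\alpha,\beta$ both odd, and hence $\gamma$ even.

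In this remaining case I would first re-sign to normalize $\Sigma$. The cuts $\delta(\{a_i\})$ for $i\in\{1,2\}$ involve only edges of $C_A$, so one can re-sign to make $\alpha=1$; symmetrically $\beta=1$ using the cuts at $b_1,b_2$. Next, $C_C$ itself equals the cut $\delta(\{a_3,a_4,b_1,b_2\})$, and combined with the vertex cuts at $a_3,a_4,b_3,b_4$ (each of which flips two edges of $C_C$) this lets one force $\gamma=0$; any side-effect on $\alpha$ or $\beta$ is by an even number, so the earlier reductions can be redone. Using the automorphisms of $G$ given by swapping $a_1\leftrightarrow a_2$, $b_3\leftrightarrow b_4$, $a_3\leftrightarrow a_4$, $b_1\leftrightarrow b_2$, I may further assume $\Sigma=\{a_1b_4,\,a_4b_1\}$. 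An even-cycle decomposition is then supplied by the two edge-disjoint $6$-cycles $a_1b_3a_3b_1a_4b_4a_1$ (even, since it contains exactly the two odd edges) and $a_2b_3a_4b_2a_3b_4a_2$ (even, since it has no odd edges).

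The main obstacle is the step that forces $\gamma=0$: when $\gamma=2$ with the two odd $C_C$-edges forming a matching in $C_C$ (say $\{a_3b_3,a_4b_4\}$), no single vertex cut flips both, so one must combine two cuts (for instance, first the cut at $a_3$ to move them into a shared-endpoint configuration, then the cut at $b_4$) to eliminate them. Once $\Sigma$ is normalized, exhibiting the two explicit $6$-cycles completes the proof.
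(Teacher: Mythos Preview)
Your proof is correct and follows essentially the same approach as the paper: partition $E(G)$ into the same three $4$-cycles, dispose of the case where either outer cycle is even via Lemma~\ref{basebasecase}, and then re-sign in the remaining case to reach a short explicit list of configurations. The only cosmetic difference is that the paper normalizes less aggressively---it leaves three subcases for $C_C$ (no odd edges, two odd edges forming a matching, two adjacent odd edges) and exhibits a decomposition for each---whereas you push the re-signing and automorphisms further to reduce to a single configuration with one explicit pair of $6$-cycles.
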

\begin{proof}
Let $G:=K_{4,4}-C_4$ with bipartition $\{a,b,c,d\}$ and $\{w,x,y,z\}$ such that $C_1:=aybz$, $C_2:=cydz$, and $C_3:=wcxd$ are edge-disjoint $4$-cycles and the deleted $C_4$ is $awbx$. Let $\Sigma$ be a signing of $E(G)$ with $\abs{\Sigma}$ even.  If either $C_1$ or $C_3$ is even then $(G, \Sigma)$ is even-cycle decomposable by Lemma~\ref{basebasecase}.  So we may assume $C_1$ and $C_3$ are both odd and therefore $C_2$ is even.

\begin{figure}
\begin{center}
\begin{tikzpicture}[scale=.85]
  \tikzstyle{vertex}=[circle,fill=black!25,minimum size=12pt,inner sep=0pt]

  \foreach \name/\x in {a/0, y/1, c/2, w/3}
    \node[vertex] (G-\name) at (\x,0) {$\name$};

  \foreach \name/\x in {b/0, z/1, d/2, x/3}
    \node[vertex] (G-\name) at (\x,-1) {$\name$};

  \foreach \from/\to in {a/y,b/y,a/z,b/z}
    \draw[dashed,thick] (G-\from) -- (G-\to);
  \foreach \from/\to in {c/y,d/y,c/z,d/z}
    \draw[dashed,thick] (G-\from) -- (G-\to);
  \foreach \from/\to in {c/w,d/w,c/x,d/x}
    \draw[dashed,thick] (G-\from) -- (G-\to);
  \draw[thick] (G-a) -- (G-y);
  \draw[thick] (G-c) -- (G-w);

  \draw[->] (4,-.5) -- (5,-.5);

  \foreach \name/\x in {a/0, y/1, c/2, w/3}
    \node[vertex] (G-a\name) at (\x+6,0) {$\name$};
  \foreach \name/\x in {b/0, z/1, d/2, x/3}
    \node[vertex] (G-a\name) at (\x+6,-1) {$\name$};

  \foreach \name/\x in {a/0, y/1, c/2, w/3}
    \node[vertex] (G-b\name) at (\x+10.5,0) {$\name$};

  \foreach \name/\x in {b/0, z/1, d/2, x/3}
    \node[vertex] (G-b\name) at (\x+10.5,-1) {$\name$};
  \foreach \from/\to in {aa/ay,aa/az,az/ac,ay/ad,aw/ac,aw/ad}
    \draw[dashed,thick] (G-\from) -- (G-\to);
  \foreach \from/\to in {aa/ay,ac/aw}
    \draw[thick] (G-\from) -- (G-\to);
  \foreach \from/\to in {bb/by,bb/bz,bz/bd,by/bc,bc/bx,bd/bx}
    \draw[dashed,thick] (G-\from) -- (G-\to);

\end{tikzpicture}\vspace{.5cm}

\begin{tikzpicture}[scale=.85]
  \tikzstyle{vertex}=[circle,fill=black!25,minimum size=12pt,inner sep=0pt]

  \foreach \name/\x in {a/0, y/1, c/2, w/3}
    \node[vertex] (G-\name) at (\x,0) {$\name$};

  \foreach \name/\x in {b/0, z/1, d/2, x/3}
    \node[vertex] (G-\name) at (\x,-1) {$\name$};

  \foreach \from/\to in {a/y,b/y,a/z,b/z}
    \draw[dashed,thick] (G-\from) -- (G-\to);
  \foreach \from/\to in {c/y,d/y,c/z,d/z}
    \draw[dashed,thick] (G-\from) -- (G-\to);
  \foreach \from/\to in {c/w,d/w,c/x,d/x}
    \draw[dashed,thick] (G-\from) -- (G-\to);
  \draw[thick] (G-a) -- (G-y);
  \draw[thick] (G-c) -- (G-w);
  \draw[thick] (G-y) -- (G-c);
  \draw[thick] (G-z) -- (G-d);

  \draw[->] (4,-.5) -- (5,-.5);

  \foreach \name/\x in {a/0, y/1, c/2, w/3}
    \node[vertex] (G-a\name) at (\x+6,0) {$\name$};
  \foreach \name/\x in {b/0, z/1, d/2, x/3}
    \node[vertex] (G-a\name) at (\x+6,-1) {$\name$};

  \foreach \name/\x in {a/0, y/1, c/2, w/3}
    \node[vertex] (G-b\name) at (\x+10.5,0) {$\name$};

  \foreach \name/\x in {b/0, z/1, d/2, x/3}
    \node[vertex] (G-b\name) at (\x+10.5,-1) {$\name$};
  \foreach \from/\to in {aa/ay,aa/az,az/ac,ay/ad,aw/ac,aw/ad}
    \draw[dashed,thick] (G-\from) -- (G-\to);
  \foreach \from/\to in {aa/ay,ac/aw}
    \draw[thick] (G-\from) -- (G-\to);
  \foreach \from/\to in {bb/by,bb/bz,bz/bd,by/bc,bc/bx,bd/bx}
    \draw[dashed,thick] (G-\from) -- (G-\to);
  \foreach \from/\to in {by/bc,bz/bd}
    \draw[thick] (G-\from) -- (G-\to);
\end{tikzpicture}\vspace{.5cm}

\begin{tikzpicture}[scale=.85]
  \tikzstyle{vertex}=[circle,fill=black!25,minimum size=12pt,inner sep=0pt]

  \foreach \name/\x in {a/0, y/1, c/2, w/3}
    \node[vertex] (G-\name) at (\x,0) {$\name$};

  \foreach \name/\x in {b/0, z/1, d/2, x/3}
    \node[vertex] (G-\name) at (\x,-1) {$\name$};

  \foreach \from/\to in {a/y,b/y,a/z,b/z}
    \draw[dashed,thick] (G-\from) -- (G-\to);
  \foreach \from/\to in {c/y,d/y,c/z,d/z}
    \draw[dashed,thick] (G-\from) -- (G-\to);
  \foreach \from/\to in {c/w,d/w,c/x,d/x}
    \draw[dashed,thick] (G-\from) -- (G-\to);
  \draw[thick] (G-a) -- (G-y);
  \draw[thick] (G-c) -- (G-w);
  \draw[thick] (G-y) -- (G-c);
  \draw[thick] (G-y) -- (G-d);

  \draw[->] (4,-.5) -- (5,-.5);

  \foreach \name/\x in {a/0, y/1, c/2, w/3}
    \node[vertex] (G-a\name) at (\x+6,0) {$\name$};
  \foreach \name/\x in {b/0, z/1, d/2, x/3}
    \node[vertex] (G-a\name) at (\x+6,-1) {$\name$};

  \foreach \name/\x in {a/0, y/1, c/2, w/3}
    \node[vertex] (G-b\name) at (\x+10.5,0) {$\name$};

  \foreach \name/\x in {b/0, z/1, d/2, x/3}
    \node[vertex] (G-b\name) at (\x+10.5,-1) {$\name$};
  \foreach \from/\to in {aa/ay,aa/az,az/ad,ad/ax,ax/ac,ac/ay}
    \draw[dashed,thick] (G-\from) -- (G-\to);
  \foreach \from/\to in {aa/ay,ay/ac}
    \draw[thick] (G-\from) -- (G-\to);
  \foreach \from/\to in {by/bb,bb/bz,bz/bc,bc/bw,bw/bd,bd/by}
    \draw[dashed,thick] (G-\from) -- (G-\to);
  \foreach \from/\to in {by/bd,bc/bw}
    \draw[thick] (G-\from) -- (G-\to);

\end{tikzpicture}\vspace{-.2cm}
\end{center}
\caption{Three possible signatures for $K_{4,4}-C_4$ in the proof of Lemma~\ref{basecase}, and corresponding even-cycle decompositions.  Odd edges are solid and even edges are dashed.}\label{fig:k44c4}
\end{figure}
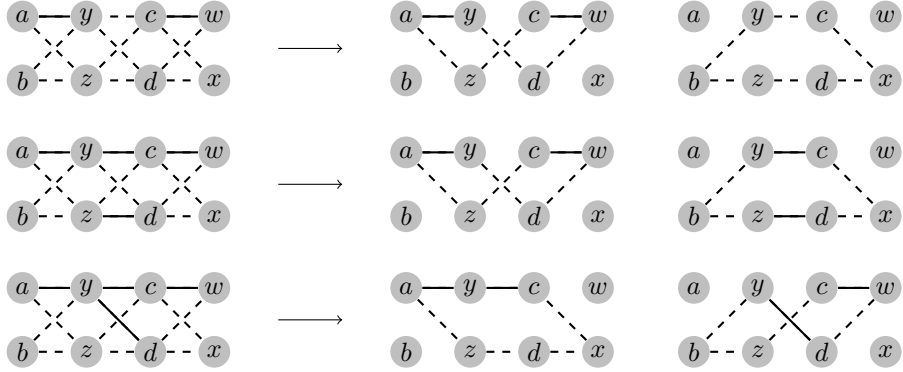

Note that $E(C_1)$, $E(C_2)$, and $E(C_3)$ are disjoint edge cuts.  We
may therefore assume by re-signing that each of $C_1$ and $C_3$
contains exactly one odd edge and that $C_2$ has either zero or two
odd edges.  Assume without loss of generality that the edges $ay$ and
$cw$ are odd.  There are three cases to consider: $C_2$ has no odd
edges, $C_2$ has two odd edges forming a matching (which we can
choose, by re-signing), or $C_2$ has two odd edges not forming a
matching (which can be chosen to meet at $y$ or $c$ by re-signing).
The even-cycle decompositions for each of these cases are shown in Figure~\ref{fig:k44c4}.
\end{proof}

\begin{LEM}\label{Km,nC4}
$K_{n,m}-C_4$ is strongly even-cycle decomposable for all even integers $n,m \geq 2$.
\end{LEM}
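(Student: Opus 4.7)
My plan is to induct on $n+m$. The base case $n=m=2$ is immediate, since $K_{2,2}-C_4$ is the empty graph and every (trivial) signing admits the empty even-cycle decomposition. For the inductive step, by symmetry of $n$ and $m$ I may assume $m\geq 4$: if both $n$ and $m$ are at most $3$ then $n=m=2$ since they are even and positive, putting us in the base case. Let the bipartition be $\{a_1,\dots,a_n\}\cup\{b_1,\dots,b_m\}$ with deleted cycle $C_4=a_1b_1a_2b_2$, and choose two vertices $b_{m-1},b_m\in\{b_3,\dots,b_m\}$, which exist because $m\geq 4$.

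Given a signing $\Sigma$ of $E(K_{n,m}-C_4)$ with $\abs{\Sigma}$ even, the key structural observation is that $b_{m-1}$ and $b_m$ are non-adjacent in the bipartite graph, so their stars are edge-disjoint and their union is a forest. By Lemma~\ref{forest} I re-sign so that every edge incident with $b_{m-1}$ or $b_m$ is even. Since $n$ is even I pair up $a_1,\dots,a_n$ into $n/2$ pairs and form the four-cycles $a_{2i-1}b_{m-1}a_{2i}b_m$ for $i=1,\dots,n/2$. None of these cycles meets the deleted $C_4$ (which uses only $b_1,b_2$ on the $B$-side), and each has all four edges in the two stars, hence is an even cycle. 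Together these $n/2$ four-cycles edge-partition the $2n$ edges incident with $b_{m-1}$ or $b_m$.

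Deleting these edges leaves precisely $K_{n,m-2}-C_4$, with the deleted $C_4$ unchanged, and the restricted signature still has even size. By the inductive hypothesis this smaller graph is strongly even-cycle decomposable, so the restricted signature admits an even-cycle decomposition; concatenating it with the $n/2$ peeled-off four-cycles yields the desired even-cycle decomposition of $(K_{n,m}-C_4,\Sigma)$. I do not foresee a genuine obstacle: the only insight required is that the stars at two non-adjacent vertices of a bipartite graph form a forest, which via Lemma~\ref{forest} lets us kill every odd edge at $b_{m-1}$ and $b_m$ in one shot and makes the peeling step automatic.
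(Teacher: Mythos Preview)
There is a genuine gap. Your ``key structural observation'' that the union of the stars at $b_{m-1}$ and $b_m$ is a forest is false: since $b_{m-1}$ and $b_m$ are both adjacent to every vertex of $A$, the union of their stars is the complete bipartite graph $K_{2,n}$ on $\{b_{m-1},b_m\}\cup\{a_1,\dots,a_n\}$, and this contains $4$-cycles as soon as $n\ge 2$. Lemma~\ref{forest} therefore cannot be applied with $F$ equal to that edge set. Concretely, if the original signature makes some $4$-cycle $a_p b_{m-1} a_q b_m$ odd, then no re-signing can make all four of its edges even, because re-signing preserves the parity of every cycle. So the step where you peel off $n/2$ even $4$-cycles after a single re-signing simply does not go through.

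The paper's proof negotiates exactly this obstacle. It does \emph{not} try to make all edges at two $B$-vertices even; instead it finds two vertices $y_{m-1},y_m\in\{y_3,\dots,y_m\}$ of the same parity (this pigeonhole needs $m\ge 6$), so that the induced $K_{2,n}$ carries an even number of odd edges, and then invokes the separately proved Lemma~\ref{basebasecase} to decompose that $K_{2,n}$. The base cases $K_{2,j}-C_4$ and $K_{4,4}-C_4$ (Lemmas~\ref{basebasecase} and~\ref{basecase}) are there precisely because the pigeonhole step fails for $m=4$. If you want to repair your argument, you can re-sign so that only the star at $b_{m-1}$ is all even (that \emph{is} a tree); then you need $b_m$ to have even parity so that the $a_i$'s can be paired by their sign toward $b_m$---but securing such a $b_m$ is exactly the parity/pigeonhole step the paper uses, and it still leaves the $m=4$ case to be handled separately.
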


\begin{proof}
We proceed by induction on $n+m$.  Note that $K_{2,j}-C_4$ is $K_{2, j-2}$ (together with two isolated vertices). By Lemma~\ref{basebasecase} we may assume that $n,m \geq 4$.  Next, by Lemma~\ref{basecase}, we may assume that $m \geq 6$.  Now let $X:=\{x_1, \dots, x_n\}$ and $\{y_1, \dots, y_m\}$ be a bipartition of $K_{n,m}-C_4$, and let $\Sigma$ be a signing of its edges with $\abs{\Sigma}$ even.  By re-indexing, we may assume that the vertices of the missing $C_4$ are $\{x_1,x_2,y_1,y_2\}$.    
Since $m \geq 6$, two vertices in $\{y_3,y_4,\ldots,y_{m}\}$ (say, $y_{m-1}$ and $y_m$) must have the same parity.  Thus, the subgraphs induced by $X \cup \{y_{m-1},y_m\}$ and $X \cup \{y_1, \dots, y_{m-2}\}$ both contain an even number of odd edges.   But now we are finished since the first subgraph is isomorphic to $K_{2,n}$ (and hence is strongly even-cycle decomposable by Lemma~\ref{basebasecase}), while the second is isomorphic to $K_{n, m-2}-C_4$ (and is strongly even-cycle decomposable by the induction hypothesis).
\end{proof}
\begin{LEM}\label{Km,nsign}
  Let $n,m\ge 2$ be even integers.
  For every signing $\Sigma$ of $E(K_{n,m})$, there exists a $4$-cycle
  $C$ in $K_{n,m}$ such that $\abs{\Sigma\cap E(C)}\equiv\abs{ \Sigma}\pmod2$.
\end{LEM}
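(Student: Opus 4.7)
The plan is to re-sign $\Sigma$ using a carefully chosen spanning tree so that the parity of each candidate $4$-cycle becomes readable off a single edge, and then to finish by a short parity argument that uses $(n-1)(m-1)$ being odd. Write $X=\{x_1,\dots,x_n\}$ and $Y=\{y_1,\dots,y_m\}$ for the two parts of $K_{n,m}$.

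First I would check that $|\Sigma|\bmod 2$ is re-signing invariant, so that both quantities in the conclusion depend only on the equivalence class of $\Sigma$. For any $S\subseteq V(K_{n,m})$ with $a:=|S\cap X|$ and $b:=|S\cap Y|$, one has $|\delta(S)|=a(m-b)+b(n-a)=am+bn-2ab$, which is even since $n$ and $m$ are both even; so every cut of $K_{n,m}$ has even size. The quantity $|\Sigma\cap E(C)|\bmod 2$ for a cycle $C$ is re-signing invariant by the standard cycle-crosses-cut observation.

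Next, using Lemma~\ref{forest} I would re-sign so that the \emph{double-star} spanning tree $T$ consisting of every edge incident to $x_1$ together with the edges $x_iy_1$ ($i\ge 2$) is entirely even. The tree $T$ has $m+(n-1)=n+m-1$ edges, and its non-tree edges are precisely the $(n-1)(m-1)$ edges $x_iy_j$ with $i,j\ge 2$. For $i,j\ge 2$ the $4$-cycle $C_{ij}$ on $\{x_1,x_i,y_1,y_j\}$ contains exactly one non-tree edge, namely $x_iy_j$, so
\[
|\Sigma\cap E(C_{ij})|\equiv [x_iy_j\in\Sigma]\pmod 2.
\]
Likewise, after the re-signing, $|\Sigma|\equiv s\pmod 2$, where $s$ is the number of odd edges among the non-tree edges $x_iy_j$ ($i,j\ge 2$).

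Finally I would argue that some non-tree edge $x_iy_j$ has sign equal to $s\bmod 2$, which exhibits the desired $4$-cycle. If not, every non-tree edge has the opposite sign $1-s\bmod 2$, giving $s\equiv(n-1)(m-1)(1-s)\pmod 2$; since $n,m$ are even, $(n-1)(m-1)$ is odd, so this collapses to $s\equiv 1-s\pmod 2$, a contradiction. I do not anticipate any real obstacle; the one mildly clever ingredient is choosing the double-star spanning tree, after which the $4$-cycles through $x_1$ and $y_1$ biject with non-tree edges and the problem reduces to pure counting modulo $2$.
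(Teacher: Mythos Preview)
Your proof is correct and follows essentially the same approach as the paper's: re-sign along a tree and then locate the desired $4$-cycle by a parity count that exploits the oddness of $n-1$ and $m-1$. The only difference is cosmetic---you re-sign along a spanning double-star so that one parity argument on the $(n-1)(m-1)$ non-tree edges finishes, whereas the paper re-signs only along the star at a single vertex $a_1\in A$ and then argues in two short steps (first finding $a_2\in A\setminus\{a_1\}$ of the right parity, then choosing $b_1,b_2\in B$); your explicit check that $|\Sigma|\bmod 2$ is re-signing invariant is a point the paper leaves implicit.
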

\begin{proof}
  Let $(A,B)$ be the bipartition of $K_{n,m}$.  By re-signing, we
  may assume that there exists a vertex $a_1 \in A$ such that all
  edges incident with $a_1$ are even.  Now, since $\abs{A}$ and the
  parity of $a_1$ are even, it follows that there is a vertex $a_2 \in
  A$ such that $a_2$ has the same parity as $\abs{\Sigma}$ and $a_2 \neq
  a_1$.   
  Therefore,  there exist $b_1,b_2\in B$ such that $C:=a_1b_1a_2b_2$
  is a $4$-cycle with the same parity as $\abs{\Sigma}$. 
\end{proof}
\begin{LEM}\label{Km,n}
  For all even  integers $n,m \geq 2$,
  $K_{n,m}$ is strongly even-cycle decomposable.
\end{LEM}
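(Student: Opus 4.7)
The plan is to combine the two preceding lemmas in a direct way. Given an arbitrary signing $\Sigma$ of $E(K_{n,m})$ with $\abs{\Sigma}$ even, I would first invoke Lemma~\ref{Km,nsign} to extract a $4$-cycle $C$ with $\abs{\Sigma \cap E(C)} \equiv \abs{\Sigma} \equiv 0 \pmod 2$. Under the signing $\Sigma$ this $4$-cycle $C$ is even, so it is a legitimate piece of an even-cycle decomposition.

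Next, I would delete the edges of $C$ to form $K_{n,m} - C_4$, carrying along the restriction $\Sigma' := \Sigma \setminus E(C)$ as a signing of the remaining edges. Since both $\abs{\Sigma}$ and $\abs{\Sigma \cap E(C)}$ are even, $\abs{\Sigma'}$ is also even. Applying Lemma~\ref{Km,nC4} to $(K_{n,m} - C_4, \Sigma')$ yields an even-cycle decomposition of the remaining edges. Appending the even $4$-cycle $C$ to this decomposition produces an even-cycle decomposition of $(K_{n,m}, \Sigma)$, as required.

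There is no real obstacle here: all the genuine work has been front-loaded into Lemmas~\ref{Km,nC4} and~\ref{Km,nsign}. The only point that needs a brief sanity check is the parity bookkeeping, namely that removing the even $4$-cycle preserves the even-size condition on the residual signature, which is immediate. Thus the proof is essentially a one-step reduction from $K_{n,m}$ to $K_{n,m} - C_4$ via a judiciously chosen even $4$-cycle guaranteed by Lemma~\ref{Km,nsign}.
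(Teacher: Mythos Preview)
Your proposal is correct and matches the paper's proof essentially line for line: invoke Lemma~\ref{Km,nsign} to find an even $4$-cycle $C$, then apply Lemma~\ref{Km,nC4} to the remaining graph $K_{n,m}-C_4$ with its restricted signature. The paper's version is terser, but the argument is identical.
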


\begin{proof}
  Let $\Sigma$ be a signing of $E(K_{n,m})$ with $\abs{\Sigma}$ even.
  By Lemma~\ref{Km,nsign}, there exists an even $4$-cycle $C$ of $K_{n,m}$.
  We are done by Lemma~\ref{Km,nC4}.
\end{proof}

\section{Small graphs}\label{sec:smallgraphs}
We have previously noted that $K_5$ is not strongly even-cycle decomposable.  On the other hand, up to re-signing, there is only one even-size signature $\Sigma$ of $E(K_5)$ such that $(K_5, \Sigma)$ is not even-cycle decomposable.

\begin{LEM} \label{K5}
Let $\Sigma$ be a signing of $E(K_5)$ with $\abs{\Sigma}$ even. If $(K_5,\Sigma)$ is not even-cycle decomposable, then $\Sigma$ and $E(K_5)$ are equivalent.
\end{LEM}

\begin{proof}
Since the complement of a $5$-cycle in $K_5$ is another $5$-cycle, every $5$-cycle of $(K_5, \Sigma)$ must be odd, else $(K_5, \Sigma)$ is even-cycle decomposable.
Every $4$-cycle in $(K_5, \Sigma)$ is the symmetric difference of two $5$-cycles, and thus is even.
Every $3$-cycle in $(K_5, \Sigma)$  is the symmetric difference of a $4$-cycle and a $5$-cycle, and thus is odd.
Thus, $(K_5, \Sigma)$ and $(K_5, E(K_5))$ have the same set of even cycles, and so $\Sigma$ and $E(K_5)$ are equivalent. 
\end{proof}

For a graph $G$, $e \in E(G)$ and a non-negative integer $m$, we define $G+me$ to be the graph obtained from $G$ by adding $m$ additional edges in parallel with $e$.

\begin{LEM}\label{parallel}
  If $G$ is a strongly even-cycle decomposable graph, then $G+me$ is strongly even-cycle decomposable for all $e \in E(G)$ and all even $m$.  
\end{LEM}
\begin{proof}
 Consider a signing $\Sigma$ of $E(G+me)$ with $\abs{\Sigma}$ even. Let $t$ and $w$ be the ends of $e$.  Note that there is a set of $m/2$ edge-disjoint even $2$-cycles between
 $t$ and $w$ in $(G+me, \Sigma)$.  Thus, we can remove this set of even $2$-cycles and then use the fact that $G$ is strongly even-cycle decomposable.   
\end{proof}

The next lemma shows that the converse of Lemma~\ref{parallel} fails for $G=K_5$.

\begin{LEM} \label{K5+}
$K_5+me$ is strongly even-cycle decomposable for all even $m \geq 2$ and $e\in E(K_5)$.  
\end{LEM}
\begin{proof}
Let $e=tw$ and $\Sigma$ be a signing of $E(K_5+me)$ with $\abs{\Sigma}$ even.
By repeatedly removing even $2$-cycles between $t$ and $w$ in
$(K_5+me,\Sigma)$, we may assume $m=2$.  Let $e_1$ and $e_2$ be
edges between $t$ and $w$ with the same sign.  If $((K_5+2e) \setminus
e_1\setminus e_2, \Sigma \setminus \{e_1,e_2\})$ is not equivalent to 
$((K_5+2e) \setminus e_1\setminus e_2, E((K_5+2e)\setminus
e_1\setminus e_2))$, then we are done by Lemma~\ref{K5}.  
Thus, by re-signing, we may assume that all edges in $E(K_5+2e) \setminus
\{e_1, e_2\}$ are odd.  Either $e_1$ and $e_2$ are both odd, or $e_1$ and $e_2$ are both even.  Even-cycle decompositions for both possibilities are shown in Figure~\ref{fig:K5+}.
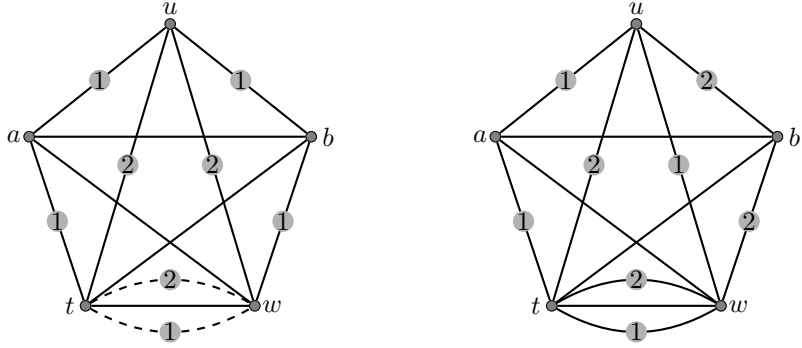
\begin{figure}
\centering
 \begin{tikzpicture}[scale=.75]
\tikzstyle{every node}=[circle,fill=black!30,inner sep=0pt,minimum width=4pt]
\node at (-1,-2) (v1) [label=left:$t$,draw,fill=black!50]{};
\node at (-2,1) (v2) [label=left:$a$,draw,fill=black!50]{};
\node at (.5,3) (v3) [label=above:$u$,draw,fill=black!50]{};
\node at (3,1) (v4) [label=right:$b$,draw,fill=black!50]{};
\node at (2,-2) (v5) [label=right: $w$,draw,fill=black!50] {};
\draw [thick] (v2) --node[midway] {1} (v3) -- node[midway] {1}(v4);
\draw[dashed,bend right,thick] (v1) to node[midway] {1} (v5);
\draw[dashed,bend left,thick] (v1) to node[midway] {2} (v5);
\draw[thick](v2) -- (v4);
\draw[thick](v1) -- (v5);
\draw[thick] (v2) --node[midway] {1}(v1) --node[midway] {2} (v3) -- node[midway] {2}(v5) --node[midway] {1} (v4);
\draw[thick] (v2) -- (v5);
\draw[thick](v4) -- (v1);
\end{tikzpicture}\hspace{5em}%
\begin{tikzpicture}[scale=.75]
\tikzstyle{every node}=[circle,fill=black!30,inner sep=0pt,minimum width=4pt]
\node at (-1,-2) (v1) [label=left:$t$,draw,fill=black!50]{};
\node at (-2,1) (v2) [label=left:$a$,draw,fill=black!50]{};
\node at (.5,3) (v3) [label=above:$u$,draw,fill=black!50]{};
\node at (3,1) (v4) [label=right:$b$,draw,fill=black!50]{};
\node at (2,-2) (v5) [label=right: $w$,draw,fill=black!50] {};
\draw [thick] (v2) --node[midway] {1} (v3) -- node[midway] {2}(v4);
\draw[bend right,thick] (v1) to node[midway] {1} (v5);
\draw[bend left,thick] (v1) to node[midway] {2} (v5);
\draw[thick](v2) -- (v4);
\draw[thick](v1) -- (v5);
\draw[thick] (v2) --node[midway] {1}(v1) --node[midway] {2} (v3) -- node[midway] {1}(v5) --node[midway] {2} (v4);
\draw[thick] (v2) -- (v5);
\draw[thick](v4) -- (v1);
\end{tikzpicture}
\caption{Even-cycle decompositions of signed graphs in the proof of
  Lemma~\ref{K5+}. Solid edges are odd and dashed edges are even. The bent edges represent $e_1$ and $e_2$.
 The labelled edges illustrate the union of two even cycles  whose removal leaves an even $4$-cycle.}
\label{fig:K5+}
\end{figure}
\end{proof}

Let $\overline{K_n}$ denote the graph with $n$ vertices and no edges. 
Let $G$ and $H$ be graphs.  The \emph{join of $G$ and $H$} is the
graph obtained from the disjoint union of $G$ and $H$ by 
adding an edge $uv$ for all $u\in V(G)$ and $v\in V(H)$.
Two distinct vertices $u$ and $v$ of a graph $G$ are \emph{twins} if
no vertex in $V(G)\setminus\{u,v\}$ is adjacent to exactly one of $u$ and $v$.

\begin{LEM} \label{lem:parityreduction}
    Let $G$ be a graph having pairwise non-adjacent twins $x$, $y$, $z$ such that no loops or parallel edges are incident with $x$, $y$ or $z$. If $G-x-y$ is strongly even-cycle decomposable, then $G$ is strongly even-cycle decomposble. 
\end{LEM}

\begin{proof}
    Let $\Sigma$ be a signing of $E(G)$ with $\abs{\Sigma}$ even. By the pigeonhole principle, there are distinct vertices $v_1,v_2\in \{x,y,z\}$ such that the parity of $v_1$ is equal to that of $v_2$. Note that the subgraph of $G$ induced by all edges between $G-x-y-z$ and $\{v_1,v_2\}$ is strongly even-cycle decomposable by Lemma~\ref{basebasecase} and $G-v_1-v_2$ is isomorphic to $G-x-y$, which is strongly even-cycle decomposable.
\end{proof}

\begin{LEM} \label{apex}
Let $G$ be a simple Eulerian graph with at most one cycle with $\abs{V(G)}$ even.  Let $n$ be a positive integer. Then the join of $G$ with $\overline{K_{2n}}$ is  strongly even-cycle decomposable if and only if $n>1$ or $G$ is not a co-claw.
\end{LEM}

\begin{proof}
    The forward implication is trivial because the join of a co-claw and $\overline{K_2}$ is a subdivision of $K_5$.

For the other direction, we proceed by induction on $\abs{E(G)}+n$.
Let $G'$ be the join of $G$ and $\overline{K_{2n}}$ and let $\Sigma$ be a signing
of $E(G')$ with $\abs{\Sigma}$ even.  

If $\abs{\Sigma\cap E(G)}$ is even, then we are done by
Lemma~\ref{Km,n}. Thus, we may assume
$\abs{\Sigma\cap E(G)}$ is odd, and so $E(G)=E(C)$, where $C$ is an odd cycle.     

Let $X=V(\overline{K_{2n}})$ and  $K_{C, X}$ be the subgraph of $G'$ consisting of all edges between $V(C)$ and $X$.  
Suppose that $K_{C, X}$ contains an odd $4$-cycle $D$. Observe that
$C\cup D$ can be decomposed into two even cycles. The remaining edges are even-cycle decomposable by Lemma~\ref{Km,nC4}. Thus, we may assume $K_{C, X}$ does not contain an odd $4$-cycle.  
Since the $4$-cycles of $K_{C,X}$ span its cycle space, all cycles of $K_{C,X}$ are even.  
By Lemma~\ref{forest}, we may assume that all edges of $K_{C,X}$ are even.  Since $\abs{\Sigma}$ is even, but $C$ is odd, there must be a vertex $u \in V(G) \setminus V(C)$ of odd parity in $(G',\Sigma)$.  
We handle two separate cases depending on the size of $X$.  

\medskip
\noindent\textbf{Case 1.} $\abs{X}=2$.  In this case, by hypothesis, we may assume that $G$ is not a co-claw. Let $a$ and $b$ be the two vertices in $X$.  Since $u$ has odd parity, the edges $ua$ and $ub$ have different signs.  We first suppose that $C$ has length at least 4.  We choose two vertices $x$, $y$ in $C$ such that one of the $x$-$y$ paths in $C$, say $P_1$, is even with
at most two edges. Since $C$ has length at least $4$, the other $x$-$y$ path $P_2$ has an internal vertex $z$.  But then
$D_1=\{xb,bz,za,ay\}\cup E(P_1)$ and $D_2=\{xa,au,ub,by\}\cup E(P_2)$ are even cycles. By Lemma~\ref{Km,n},
the remaining edges are even-cycle decomposable.

We may therefore assume that $C$ is a triangle.  Since $G$ is not a
co-claw and $\abs{V(G)}$ is even, 
the number $m$ of isolated vertices in $G$ is at least $3$ and odd.
But now $G'$ is a subdivision of $K_5+(m-1)e$, and so we are done
by Lemma~\ref{K5+}.    

\medskip\noindent
\textbf{Case 2.} $\abs{X}\geq 4$. 
In this case, we can apply Lemma~\ref{lem:parityreduction} and the induction hypothesis, unless $\abs{X}= 4$
and $G$ is a co-claw.  Let $X=\{a,b,c,d\}$.
Again recall that $C$ is odd, and we have re-signed so that all edges between $C$ and $X$ are even.
Let $xy$ be an odd edge in $C$. Since one of $P_1:=aub$ and $P_2:=cud$ is odd and the other is even,
$C\cup P_1\cup \{xa,yb,xc,yd\}\cup P_2$ can be partitioned into two
even cycles. We are done since the signed graph of
remaining edges of $G'$ is Eulerian with no odd edge, and hence is even-cycle decomposable.
\end{proof}

\begin{LEM} \label{cliquejoin}
   Let $G$ be a simple Eulerian graph with at most one cycle with $\abs{V(G)}$ odd.  Then the join of $G$ and $K_2$ is strongly even-cycle decomposable if and only if $G\neq K_3$.
\end{LEM}
\begin{proof}
The forward implication is trivial because the join of $K_3$ and $K_2$ is $K_5$.

Let $G'$ be the join of $G$ and $K_2$. Let $G''$ be the join of $G\cup \{u\}$ and the $\overline{K_{2}}$,
where $u$ is a new isolated vertex added to $G$. Since $G''$ is a subdivision of $G'$, it follows that $G'$ is strongly even-cycle decomposable if and
only if $G''$ is strongly even-cycle decomposable. Thus, provided $G \neq K_3$, by Lemma~\ref{apex},
$G'$ is strongly even-cycle decomposable.
\end{proof}

\begin{LEM} \label{2-cyclejoin}
Let $G_1$ be the disjoint union of one or two $2$-cycles together with an even number of isolated vertices. Then, the join of $G_1$ and $\overline{K_{2n}}$ is strongly even-cycle decomposable for all $n \geq 1$.  
\end{LEM}

\begin{proof}
Let $G_2=\overline{K_{2n}}$, $G$ be the join of $G_1$ and $G_2$, and $\Sigma$ be a signing of $E(G)$ with $\abs{\Sigma}$ even. By Lemma~\ref{lem:parityreduction}, we may assume that $\abs{V(G_2)}=2$ and that $G_1$ has $0$ or $2$ isolated vertices.

\begin{figure}
\centering
    \begin{tikzpicture}[scale=.75]
      \tikzstyle{v}=[circle,draw,fill=black!50,inner sep=0pt,minimum width=4pt]
      \node[v] at (0,0) (w0) {};
      \node[v] at (0,1) (w1){};
      \node[v] at (0,-1)(w2){};
      \node[v] at (0,-2)(w3){};
      \node[v] at (1,.5)(v1){};
      \node[v] at (1,-.5)(v2){};
      \draw [dashed] (v1)--(w0)[bend right=15] to (w1)--(v2)--(w2)--(v1);
      \draw (v1)--(w1)[bend right=15] to (w0)--(v2)--(w3)--(v1);
    \end{tikzpicture}
\caption{A decomposition of a graph into two $5$-cycles in the proof of Lemma~\ref{2-cyclejoin}.}
\label{fig:2cyclejoin1}
\end{figure}
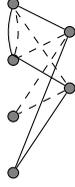

We first handle the case that $G_1$ has only one $2$-cycle $C$.  We may assume that $C$ is odd, else we can remove $E(C)$ and apply Lemma~\ref{Km,n}.  If $G_1$ has no isolated vertices, then $E(G)$ can be decomposed into two even triangles.  So, we may assume that $G_1$ has exactly two isolated vertices.  Figure~\ref{fig:2cyclejoin1} shows a decomposition of $E(G)$ into two $5$-cycles. Since the $2$-cycle $C$ is odd, we can ensure that both these $5$-cycles are even, as required. 

\begin{figure}
\centering
    \begin{tikzpicture}[scale=.75]
      \tikzstyle{v}=[circle,draw,fill=black!50,inner sep=0pt,minimum width=4pt]
      \node[v] at (0,0) (w0) {};
      \node [v] at (0,1) (w1){};
      \node[v] at (0,-1)(w2){};
      \node[v] at (0,-2)(w3){};
      \node [v] at (1,.5)(v1){};
      \node[v] at (1,-.5)(v2){};
      \draw [dashed] (v1)--(w0)[bend right=15] to (w1)--(v2)--(w2)[bend right=15] to (w3)--(v1);
      \draw (v1)--(w1)[bend right=15] to (w0)--(v2)--(w3)[bend right=15] to (w2)--(v1);
    \end{tikzpicture}
\caption{A decomposition of a graph into two $6$-cycles in the proof of Lemma~\ref{2-cyclejoin}.}
\label{fig:2cyclejoin2}
\end{figure}
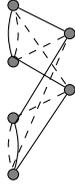

We finish by considering the case that $G_1$ has exactly two $2$-cycles $C_1$ and $C_2$.  By the previous case, we may assume that both $C_1$ and $C_2$ are odd.
We first handle the subcase that $G_1$ has no isolated vertices.  In this case, Figure~\ref{fig:2cyclejoin2} shows a decomposition of $E(G)$ into two $6$-cycles. 
Again, because of the odd $2$-cycle $C_1$, we can ensure that both $6$-cycles are even.  So, we may assume that $G_1$  has exactly two isolated vertices $x$ and $y$.  For $i \in \{1,2\}$, define $H_i$ to be $C_i$ together with all edges between $V(C_i)$ and $V(G_2)$. Define $H_3$ to be the $4$-cycle induced by $\{x,y\} \cup V(G_2)$. Note that $H_1$ and $H_2$ are strongly even-cycle decomposable by the previous case, and $H_3$ is obviously strongly even-cycle decomposable.  By the pigeonhole principle, there exist $i \neq j$ such that $\abs{E(H_i) \cap \Sigma} \equiv \abs{E(H_j) \cap \Sigma} \pmod{2}$.  But now we are done since $E(H_1) \cup E(H_2) \cup E(H_3)$ is a partition of $E(G)$.
\end{proof}

\begin{LEM}\label{lem:c42c}
    The join of a $4$-cycle and a $2$-cycle is strongly even-cycle decomposable.
\end{LEM}
\begin{proof}
    Let $G$ be the join of a $4$-cycle $C$ and a $2$-cycle $D$.
    Let $\Sigma$ be a signing of $E(G)$ with $\abs\Sigma$ even. If the $2$-cycle $D$ is even, then $(G,\Sigma)$ is even-cycle decomposable because $G\setminus E(D)$ is strongly even-cycle decomposable by Lemma~\ref{apex}. Thus, we may assume that $D$ is odd. Similarly, we may assume that $C$ is odd by Lemma~\ref{2-cyclejoin}. By Lemma~\ref{Km,nsign}, there is an even $4$-cycle $F$ in $G\setminus (E(C)\cup E(D))$. 
    Observe that $C\cup D\cup F$ can be decomposed into two even cycles and the remaining edges are even-cycle decomposable by Lemma~\ref{Km,nC4}.
\end{proof}
\begin{LEM}\label{co-claw}
 The join of two co-claws is strongly even-cycle decomposable.
\end{LEM}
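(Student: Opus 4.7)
The plan is to fix an even signing $\Sigma$ of $E(J)$, where $J$ is the join of the two co-claws with triangles $T_1 = abc$, $T_2 = def$, within-side isolated vertices $p$ (on the first side) and $q$ (on the second side), and bipartite part $K \cong K_{4,4}$ between $\{a,b,c,p\}$ and $\{d,e,f,q\}$. Set $t_i := |\Sigma \cap E(T_i)| \bmod 2$ and $k := |\Sigma \cap E(K)| \bmod 2$; the constraint $t_1 + t_2 + k \equiv 0 \pmod 2$ leaves four cases.

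Three of these cases are easy. If $t_1 = t_2 = 0$, each triangle is an even $3$-cycle and $(K, \Sigma \cap E(K))$ is even-cycle decomposable by Lemma~\ref{Km,n}. If exactly one $t_i$ is zero (say $t_2 = 0$), then $T_2$ is already an even cycle, and $T_1 \cup K$ is the join of the co-claw on $\{a,b,c,p\}$ with the empty graph on $\{d,e,f,q\}$, so Lemma~\ref{apex} yields strong even-cycle decomposability of $T_1 \cup K$; the signing it inherits has parity $t_1 + k = 0$, so it decomposes.

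The remaining case is $t_1 = t_2 = 1$ and $k = 0$. For each pair $\{x, y\} \subseteq \{d, e, f\}$, let $z$ be the third element and let $H_{xy}$ be the subdivision of $K_5^{+2}$ on vertex set $\{a, b, c, x, y, z, p\}$ defined by: place the underlying $K_5$ on $\{a, b, c, x, y\}$ using triangle $abc$, the edge $xy \in E(T_2)$, and the six bipartite edges between $\{a,b,c\}$ and $\{x,y\}$; then subdivide the two extra $xy$-edges of $K_5^{+2}$ into the paths $x\text{-}z\text{-}y$ (using $xz, zy \in E(T_2)$) and $x\text{-}p\text{-}y$ (using $xp, py \in E(K)$). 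A direct count gives $|E(H_{xy})| = 14$, and the $8$ remaining edges of $J$ form a copy of $K_{2,4}$ on parts $\{z, q\}$ and $\{a, b, c, p\}$.

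Setting $Q := \sum_{v \in \{a,b,c,p\}} |\Sigma \cap \{vq\}|$ and $D_u := \sum_{v \in \{a,b,c,p\}} |\Sigma \cap \{vu\}|$ for $u \in \{d, e, f\}$, the parity of $\Sigma$ on the residual $K_{2,4}$ in plan $H_{xy}$ is $D_z + Q$. Summing over the three plans yields $D_d + D_e + D_f + 3Q \equiv D_d + D_e + D_f + Q = k = 0 \pmod 2$, so at least one plan has even residual parity; fix such a plan. Then $H_{xy}$ also inherits an even signing, so Lemma~\ref{K5+} (applied to the subdivision of $K_5^{+2}$) together with Lemma~\ref{Km,n} (applied to the residual $K_{2,4}$) gives an even-cycle decomposition of $(J, \Sigma)$. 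The main obstacle is exactly this last case: one must spot the family of three $K_5^{+2}$-subdivisions and run the averaging argument above to guarantee that one of them works.
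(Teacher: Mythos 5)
Your proof is correct, and in the main case it takes a genuinely different route from the paper's. The easy cases coincide: like you, the paper first disposes of every situation in which at least one triangle is even (it simply says ``we may assume $C_1$ and $C_2$ are both odd, else apply Lemma~\ref{apex}''). Where you diverge is the case of two odd triangles. The paper picks two vertices $x_1,y_1$ of the first triangle and uses a one-line parity argument to find $x_2,y_2$ in the second triangle so that the $4$-cycle $C:=x_1x_2y_1y_2$ is even; it then decomposes the $10$-edge graph $C\cup C_1\cup C_2$ into two even cycles by hand and finishes with Lemma~\ref{Km,nC4} applied to the remaining $K_{4,4}-C_4$. You instead absorb both triangles, together with eight bipartite edges, into a $14$-edge subdivision of $K_5^{+2}$ and invoke Lemma~\ref{K5+}, using an averaging argument over the three choices of the distinguished pair $\{x,y\}\subseteq\{d,e,f\}$ to guarantee that the residual $K_{2,4}$ (and hence also the $K_5^{+2}$-subdivision) inherits an even signing; Lemma~\ref{Km,n} then finishes. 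Both arguments are sound. The paper's version is shorter and stays within the ``find an even $4$-cycle, peel off two even cycles, leave a $K_{n,m}-C_4$'' template used elsewhere in Section~\ref{sec:small}, at the cost of an explicit hand decomposition of $C\cup C_1\cup C_2$; yours trades that hand decomposition for the heavier machinery of Lemma~\ref{K5+} and a three-way parity average, which is a nice illustration of how the $K_5^{+2}$ gadget can be reused outside the substitution arguments of Sections~\ref{sec:sub} and~\ref{sec:twinsub}.
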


\begin{proof}
Let $G$ be the join of two co-claws. Let $C_1$ and $C_2$ be the triangles of the two co-claws. Let $\Sigma$ be a signing of $E(G)$ with $\abs{\Sigma}$ even.
We may assume that $C_1$ and $C_2$ are both odd,  else we can apply Lemma~\ref{apex}. Let $x_1$ and $y_1$ be distinct vertices in $C_1$.
By an easy parity argument, there are vertices $x_2$ and $y_2$ in $C_2$ such that $C:=x_1x_2y_1y_2$ is an even $4$-cycle. Then $C\cup C_1\cup C_2$
can be decomposed into two even cycles. The remaining edges of $G$ can be decomposed into even cycles by Lemma~\ref{Km,nC4}.
\end{proof}

\begin{LEM}\label{lem:bowtie}
    The graph in Figure~\ref{fig:bowtie} is strongly even-cycle decomposable.
\end{LEM}

\begin{figure}
\centering
    \begin{tikzpicture}[scale=.75]
      \tikzstyle{v}=[circle,draw,fill=black!50,inner sep=0pt,minimum width=4pt]
      \node[v,label=left:$x$] at (0,0) (v) {};
      \node [v,label=left:$a$] at (0,1) (w1){};
      \node[v,label=left:$b$] at (0,-1)(w2){};
      \node [v,label=right:$y$] at (1,.5)(v1){};
      \node[v] at (1,-.5)(v2){};
      \foreach \x in {1,2} {
        \draw (v1)--(w\x)--(v2);
          \draw (v) [bend left=15] to (w\x) ;
          \draw (v) [bend right=15] to (w\x);
        }
    \end{tikzpicture}
\caption{$K_{2,3}$ with added two parallel edges incident with a degree-$2$ vertex.}
\label{fig:bowtie}
\end{figure}
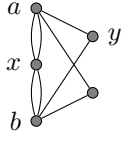
\begin{proof}
    Let $G$ be the graph in Figure~\ref{fig:bowtie}.
    Let $\Sigma$ be a signing of $E(G)$ with $\abs{\Sigma}$ even. 
    If both $2$-cycles in $G$ are even, then trivially we can decompose $E(G)$ into three even cycles.
    Thus by symmetry, we may assume that the cycle $ax$ is odd. Thus we may choose an even cycle $C=axby$ by selecting one of two edges joining $ax$ of the correct parity. Then both $C$ and $G \setminus E(C)$ are even cycles.
\end{proof}

\begin{LEM}\label{lem:k7c4c3}
    The graph $K_7-C_4-C_3$ in Figure~\ref{fig:k7c4c3} is strongly even-cycle decomposable.
\end{LEM}

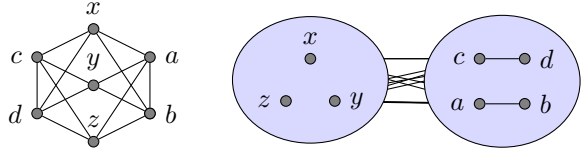
\begin{figure}
\centering
    \begin{tikzpicture}[scale=.75]
      \tikzstyle{v}=[circle,draw,fill=black!50,inner sep=0pt,minimum width=4pt]
      \node[v,label=$y$] at (0,0) (w0) {};
      \node [v,label=$x$] at (0,1) (w1){};
      \node[v,label=$z$] at (0,-1)(w2){};
      \node [v,label=right:$a$] at (1,.5)(v1){};
      \node [v,label=right:$b$] at (1,-.5)(v2){};
      \node [v,label=left:$c$] at (-1,.5)(v3){};
      \node [v,label=left:$d$] at (-1,-.5)(v4){};
      \node[v] at (1,-.5)(v2){};
      \foreach \x in {0,1,2} {
        \draw (v1)--(w\x)--(v2);
        \draw (v3)--(w\x)--(v4);
        }
        \draw(v1)--(v2); 
        \draw (v3)--(v4);
    \end{tikzpicture}
    $\quad$
 \begin{tikzpicture}[scale=.75,    every fit/.style={draw,fill=blue!15,ellipse,text width=35pt}]
    \tikzstyle{v}=[circle,draw,fill=black!50,inner sep=0pt,minimum width=4pt]
    \node[v,label=left:$z$] at (120+90:.5) (v1) {};
    \node[v,label=right:$y$] at (120*2+90:.5) (v2) {};
    \node[v,label=$x$] at (90:.5) (v3) {};
    \begin{scope}[xshift=3cm]
        \node [v,label=left:$a$] at (0,-.3) (w1) {};
        \node [v,label=right:$b$] at (.8,-.3) (w2) {};
        \node [v,label=left:$c$] at (0,.5) (w3) {};
        \node [v,label=right:$d$] at (.8,.5) (w4) {};
        \draw (w1)--(w2);
        \draw (w3)--(w4);
    \end{scope}
      \begin{scope}[on background layer]
      \foreach \x in {1,2,3} {
      \foreach \y in {1,2,3,4} 
        \draw (w\y)--(v\x);
        }
      \node[fit={(v1) (v2)(v3) }] (x) {};
      \node[fit={(w1) (w2)(w3)(w4) }] (y) {};
      \end{scope}
    \end{tikzpicture}
\caption{Two drawings of $K_7-C_4-C_3$.}
\label{fig:k7c4c3}
\end{figure}
\begin{proof}
    See Figure~\ref{fig:k7c4c3} for the labels of vertices of $G=K_7-C_4-C_3$.  Let $\Sigma$ be a signing of $E(G)$ with $\abs{\Sigma}$ even. %
    Since the symmetric difference of the three $4$-cycles $xcya$, $ycza$, and $xcza$ is empty, we may assume that $xcya$ is even. 
    
    If the $4$-cycle $xdyb$ is odd, then either $xdzb$ or $ydzb$ is even. By symmetry between $x$ and $y$, we may assume that the $4$-cycle $xdzb$ is even. Then $\{xcya, xdzb, ydczab\}$ is an even-cycle decomposition. Thus, we may assume that $xdyb$ is even.
    If the $3$-cycle $cdz$ is even, then the other $3$-cycle $abz$ is even and so $\{xcya,xdyb,cdz,abz\}$ is an even-cycle decomposition. Thus, we may assume that  $cdz$ is an odd $3$-cycle. Then either $C=dzby$ or $C'=dczby$ is an even cycle. If $C$ is even, then $\{xcya, dzby,dczabx\}$ is an even-cycle decomposition. If $C'$ is even, then $\{xcya, dczby, dzabx\}$ is an even-cycle decomposition. 
\end{proof}

\begin{LEM} \label{bowtiejoin}
Let $G_1$ be two $2$-cycles or two triangles meeting at a vertex $u$ and let $G_2=\overline{K_{2n}}$ for some $n \geq 1$.  Let $G$ be the graph obtained from the disjoint union of $G_1$ and $G_2$ by joining each vertex of $V(G_1) \setminus \{u\}$ to each vertex of $V(G_2)$.  Then $G$ is strongly even-cycle decomposable.  
\end{LEM}

\begin{proof}
Let  $\Sigma$ be a signing of $E(G)$ with $\abs{\Sigma}$ even.  By Lemma~\ref{lem:parityreduction}, we may assume $\abs{V(G_2)}=2$.  If $G_1$ is two $2$-cycles meeting at $u$, then $G$ is strongly even-cycle decomposable by Lemma~\ref{lem:bowtie}. So, we may assume that $G_1$ is two triangles meeting at $u$. Then $G$ is isomorphic to $K_7-C_4-C_3$ in Figure~\ref{fig:k7c4c3} and the conclusion follows from Lemma~\ref{lem:k7c4c3}.
\end{proof}

\begin{LEM}\label{lem:23cycle}
    Let $G$ be a graph with one $2$-cycle and one triangle meeting at a vertex $c$. Then the join of $G$ and $\overline{K_{2n}}$ is strongly even-cycle decomposable for all $n \geq 1$.
\end{LEM}
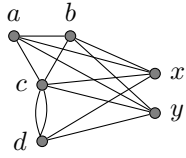
\begin{figure}
\centering
    
    \begin{tikzpicture}[scale=.75,    every fit/.style={draw,fill=blue!15,ellipse,text width=35pt}]
        \tikzstyle{v}=[circle,draw,fill=black!50,inner sep=0pt,minimum width=4pt]
        \node[v,label=left:$c$] at(0,0) (v0){};
        \node[v,label=$b$] at(60:1) (v1){};
        \node[v,label=$a$] at(120:1) (v2){};
        \node[v,label=left:$d$] at (0,-1) (v3){};
        \draw (v0)--(v1)--(v2)--(v0)[bend right=20] to (v3) [bend right=20] to (v0);
        \node[v,label=right:$y$]  at (2,-.5) (w1){};
        \node[v,label=right:$x$]  at (2,.2) (w2){};
        \foreach \x in {0,1,2,3}{
        \foreach \y in {1,2} {
        \draw (v\x)--(w\y);
        }}
    \end{tikzpicture}
\caption{The join of $\overline{K_{2}}$ with a graph having one $2$-cycle and one triangle sharing a vertex.}
\label{fig:23cycle}
\end{figure}
\begin{proof}
    Let $G'$ be the join of $G$ and $\overline{K_{2n}}$. Let $\Sigma$ be a signing of $E(G')$ with $\abs{\Sigma'}$ even. By Lemma~\ref{lem:parityreduction}, we may assume that $n=1$. We use the labels in Figure~\ref{fig:23cycle} for the vertices of $G$.

    Let $C_1$ be the $3$-cycle $abc$, $C_2$ be the $4$-cycle $bcdx$, and $C_3$ be the $5$-cycle $abcdx$. Since $G\setminus E(C_1)$ is strongly even-cycle decomposable by Lemma~\ref{2-cyclejoin}, we may assume that $C_1$ is odd. Since $G\setminus E(C_2)$ is a subdivision of $K_3+e_1+e_2+e_3$ where $E(K_3)=\{e_1,e_2,e_3\}$, which is strongly even-cycle decomposable, we may assume that $C_2$ is odd. 
    
    Since $E(C_3)=E(C_1)\Delta E(C_2)$, $C_3$ is even. Furthermore $G\setminus E(C_3)$ is a subdivision of $C_4+2e$ for an edge $e$ in $C_4$, which is strongly even-cycle decomposable by Lemma~\ref{parallel}.
\end{proof}

\section{Substituting a vertex} \label{sec:sub}

Let $G$ and $H$ be simple graphs and let $v \in V(G)$.  The \emph{substitution of $v$ by $H$ in $G$}
is the graph obtained
from the disjoint union of $G -v $ and $H$ by 
adding an edge $xy$ for all $x\in V(H)$ and all neighbors $y$ of $v$.

\begin{THM} \label{sub}
    Let $G$ be a simple strongly even-cycle decomposable graph, let $v$ be a non-isolated vertex of $G$, and let $H$ be a simple Eulerian graph with an odd number of vertices.  Then the substitution of $v$ by $H$ in $G$ is strongly even-cycle decomposable, provided that  $H$ is not $K_3$ or $\deg_G(v)\ge 4$.
\end{THM}

\begin{proof}
Let $G'$ be the substitution of $v$ by $H$ in $G$, and $\Sigma$ be a signing of $E(G')$ with $\abs{\Sigma}$ even.   Let $N$ be the set of neighbors of $v$.  We proceed by induction on $\abs{E(H)}+\abs{V(H)}$.

For us, a \emph{block} of $H$ is a maximal subgraph $B$ of $H$ such that $\abs{V(B)}\geq 2$ and $B -x$ is connected
for all $x \in V(B)$. If $(H, E(H) \cap \Sigma)$ contains an even cycle $C$, then we can apply the induction hypothesis to $H \setminus E(C)$.    Thus, we may assume each block of $(H, E(H) \cap \Sigma)$ is an odd cycle or a $K_2$. Since $H$ is Eulerian, no block can be a $K_2$. A \emph{leaf block} of $H$ is a block of $H$ that contains at most $1$ cut vertex of $H$.  

For each leaf block $B$ of $H$, we define $W_B$ to be the set of non-cut vertices of $H$ contained in $V(B)$.  
For distinct vertices $x, y \in W_B$, we define $G'(x,y)$ to be the union of $B$ together with all edges between $\{x,y\}$ and $N$.  We let $\overline{G'}(x,y)=G' \setminus E(G'(x,y))-x-y$ and $\overline{H}(x,y)=H\setminus E(B)-x-y$. Observe that $G'(x,y)$ is a subdivision of the join of a $2$-cycle and $\overline{K_{\abs{N}}}$.   By Lemma~\ref{2-cyclejoin}, $G'(x,y)$ is strongly even-cycle decomposable.
Furthermore $\overline{G'}(x,y)$ is obtained from $G$ by substituting $v$ with $\overline{H}(x,y)$. 

For each $t \in V(H)$, define $G_t:=G' - V(H - t)$ and observe that $G_t$ is isomorphic to $G'$. We also define $p(t)$ to be the
number of edges of $\Sigma$ between $t$ and $N$.  

Suppose $\abs{E(G'(x,y)) \cap \Sigma}$ is even for some leaf block $B$ of $H$ and $x,y\in W_B$. Then we are done by induction, unless $\abs{N}=2$ and $\overline{H}(x,y)=K_3$. That is, $H$ is the union of two triangles meeting at a vertex $c$.  If $\abs{E(G_t) \cap \Sigma}$ is even for some $t \in V(H) \setminus\{c\}$, then we are done since $G_t$ is 
strongly-even cycle decomposable, and the remaining edges are even-cycle decomposable by Lemma~\ref{lem:23cycle}.
Thus, we may assume $p(u)\equiv p(w)\pmod 2$ for all $u,w \in V(H) \setminus \{c\}$.
Define $H^+$ to be $H$ together with all edges between $V(H) \setminus \{c\}$ and $N$.  Observe that $\abs{E(H^+) \cap \Sigma}$ is even, and that
$H^+$ is strongly even-cycle decomposable by Lemma~\ref{bowtiejoin}.  Since $G' \setminus E(H^+)$ is isomorphic to $G'$ together with 4 isolated vertices, we are done.

We may hence assume $\abs{E(G'(x,y)) \cap \Sigma}$ is odd for all leaf blocks $B$ of $H$ and $x,y\in W_B$.  Suppose that $H$ contains distinct leaf blocks $B_1$ and $B_2$.  Let $a,b,c,d$ be distinct vertices with $a,b \in W_{B_{1}}$ and $c,d \in W_{B_2}$.  Let $G'(a,b,c,d):=G'(a,b) \cup G'(c,d)$ and let $\overline{G'}(a,b,c,d)=G' \setminus E(G'(a,b,c,d))-a-b-c-d$. Since $\abs{E(G'(a,b)) \cap \Sigma}$ and $\abs{E(G'(c,d)) \cap \Sigma}$ are both odd, $\abs{E(G'(a,b,c,d)) \cap \Sigma}$ is even.  Note that $G'(a,b, c,d)$ is a subdivision of a graph that is strongly even-cycle decomposable by either Lemma~\ref{2-cyclejoin} or Lemma~\ref{bowtiejoin}, and so we are done by the induction hypothesis unless $\abs N=2$ and $H$ is the windmill graph (see Figure~\ref{fig:windmill}) or $\abs N=2$ and $H$ is a path of three triangles (see Figure~\ref{fig:3triangle}).

\begin{figure}
\centering
    \begin{tikzpicture}[scale=.75]
    \tikzstyle{v}=[circle,draw,fill=black!50,inner sep=0pt,minimum width=4pt]
    \node [v] at (0,0) (v0) {};
    \foreach \x in {1,2,3,4,5,6}  {
        \node[v] at (60*\x:1) (v\x) {};
        \draw (v0)--(v\x);
    }
    \draw (v1)--(v2);
    \draw (v3)--(v4);
    \draw (v5)--(v6);
    \end{tikzpicture}
\caption{The windmill graph.}
\label{fig:windmill}
\end{figure}
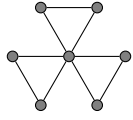

\medskip
Suppose $H$ is the windmill graph.  By the pigeonhole principle, there are  distinct blocks  $B_1$ and $B_2$ of $H$ and $x_1 \in W_{B_{1}}$ and $x_2 \in W_{B_{2}}$  such that $p(x_1)\equiv p(x_2)\pmod 2$. Let $G''$ be the subgraph of $G'$ consisting of $B_1 \cup B_2$ together with all edges between $\{x_1, x_2\}$ and $N$.  Note that $G''$ contains and even number of odd edges . We are done since $G''$ is strongly even-cycle decomposable by Lemma~\ref{bowtiejoin}, while the remaining edges are  even-cycle decomposable by the induction hypothesis.

\begin{figure}
\centering
    \begin{tikzpicture}[scale=.75]
    \tikzstyle{v}=[circle,draw,fill=black!50,inner sep=0pt,minimum width=4pt]
    \node [v,label=below:$x_1$] at (0,0) (v0) {};
        \node[v,label=below:$u$] at (1,0) (v1) {};
        \node [v,label=$y_1$] at (1-.5,.5) (w1){};
        \node[v,label=below:$x_2$] at (2,0) (v2) {};
        \node [v,label=$y_2$] at (2-.5,.5) (w2){};
        \node[v] at (3,0) (v3) {};
        \node [v] at (3-.5,.5) (w3){};
    \draw (v0) --(v1)--(v2)--(v3);
    \draw (v0)--(w1)--(v1)--(w2)--(v2)--(w3)--(v3);
    
    \end{tikzpicture}
\caption{A path of three triangles.}
\label{fig:3triangle}
\end{figure}
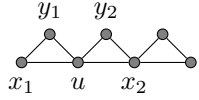

\medskip
Suppose $H$ is a path of three triangles. Let $B_1:=x_1y_1u$ be a leaf block and $B_0:=ux_2y_2$ be a triangle of $H$ sharing a vertex $u$ with $B_1$ (See Figure~\ref{fig:3triangle}).  Let $H'$ be the subgraph of $H$ consisting of $E(B_1) \cup E(B_0)$ together with all edges between $\{x_1,y_1,x_2,y_2\}$ and $N$.  If $p(x_2)\equiv p(y_2)\pmod 2$, then $H'$ contains an even number of odd edges.  Moreover, $H'$ is strongly even-cycle decomposable by Lemma~\ref{bowtiejoin}, and the remaining edges are even-cycle decomposable by the induction hypothesis.  Thus, we may assume $p(x_2) \not\equiv p(y_2)\pmod2$.  It follows that we may pick $z \in \{x_2, y_2\}$ such that $p(x_1)\equiv p(z)\pmod 2$.  Let $G''$ be the subgraph of $G'$ consisting of $E(B_1) \cup E(B_0)$ 
together with all edges between $\{x_1, z\}$ and $N$.  Note that $G''$ contains an even number of odd edges.  We are thus finished since $G''$ is strongly even-cycle decomposable by Lemma~\ref{bowtiejoin}, and the remaining edges are even-cycle decomposable by the induction hypothesis.

We have therefore reduced to the case that $H$ has at most one leaf block. By Lemma~\ref{lem:parityreduction}, we may assume that $H$ has at most $2$ isolated vertices. So, we may assume that $H$ is a cycle $C$ together with a set $S$ of isolated vertices with $\abs{S}\le 2$.

If $\abs{V(H)} \geq 7$, then then there exist two vertices
$x,y \in V(C)$ such that $p(x)\equiv p(y)\pmod 2$. 
Since $K_{2,\abs{N}}$ is strongly even-cycle decomposable by Lemma~\ref{Km,n}, we may shrink $H$ by suppressing $x$ and $y$ and apply the induction hypothesis
(as $\abs{V(H)} \geq 7$).

We may hence assume that $\abs{V(H)} \leq 5$, and so $H$ is a $5$-cycle, a $4$-cycle with one isolated vertex, a triangle with two isolated vertices,  or a triangle.  
We finish by handling each of these cases separately. 

\medskip
Suppose $H$ is a $5$-cycle. Let $x$ and $y$ be distinct vertices of $H$ such that $p(x)\equiv p(y)\pmod 2$.  Then we may shrink $H$ by suppressing $x$ and $y$
and apply the induction hypothesis, unless $\abs N=2$.  So, we may assume $\abs N=2$.  Let $u$ be an arbitrary vertex of $H$. By Lemma~\ref{almost}, there is an almost even-cycle decomposition $\{C'\} \cup \mathcal{C}$ of $(G_u,\Sigma\cap E(G_u))$, where all cycles in $\mathcal{C}$ are even and $u \in V(C')$.  
Let $G''$ be the graph induced by the edges in $E(G') \setminus (E(G_u)\setminus E(C'))$. Observe that
$G''$ is isomorphic to a subdivision of the join of a $5$-cycle and $K_2$.  By Lemma \ref{cliquejoin}, $G''$ is strongly even-cycle decomposable.

\medskip 
Suppose $H$ is a $4$-cycle with an isolated vertex $s$.   If $\abs{E(G_s)\cap \Sigma}$ is even, then we are done since the graph induced by the remaining edges is the join of $C_4$ and $\overline{K_{\abs{N}}}$, which is strongly even-cycle decomposable by Lemma~\ref{apex}.  Therefore we may assume that $\abs{E(G_s)\cap \Sigma}$ is odd. 

Suppose $\abs{N} \geq 4$.  If $p(s)\equiv p(x)\pmod{2}$ for some $x \in V(C)$, then the set $F$ of edges from $\{s,x\}$ to $N$ is even-cycle decomposable. Since $\abs{N}\ge 4$, the remaining edges are even-cycle decomposable by the induction hypothesis.
Therefore we may assume $\abs{E(G_x)\cap \Sigma}$ is even for all $x \in V(C)$.  Then for every vertex $x$ of $C$, the graph obtained from $G'\setminus E(G_x)$ by suppressing $x$ is strongly even-cycle decomposable by Lemma~\ref{apex}.

It remains to consider the case when $\abs{N}=2$.  Let $\{C'\} \cup \mathcal{C}$ be an almost even-cycle decomposition of $(G_s,\Sigma\cap E(G_s))$, where all cycles in $\mathcal{C}$ are even and $s \in V(C')$.  Let $G''$ be the graph induced by $E(G') \setminus (E(G_s)\setminus E(C'))$. Note that $G''$ is a subdivision of the join of $C_4$ and a $2$-cycle, which is strongly even-cycle decomposable by Lemma~\ref{lem:c42c}. 

\medskip 
Suppose $H$ is a triangle with two isolated vertices.  Let $S=\{x,y\}$. Suppose $\abs N \geq 4$.  
We first handle the subcase that $p(x)\equiv p(y)\pmod2$.  Here, we shrink $H$ by suppressing $x$ and $y$ and apply the induction hypothesis to deduce that $(G',\Sigma)$ is even-cycle decomposable since 
$\abs {N} \geq 4$.  So, we may assume that $p(x) \not\equiv p(y)\pmod2$.  By symmetry, we may assume that $G_x$ contains an even number of odd edges.   
Let $G''$ be the graph induced by  $E(G') \setminus E(G_x)$.
Observe that $G''$ is the join of a co-claw and $\overline{K_{\abs N}}$, and we are done by Lemma~\ref{apex}.

We may hence assume that $\abs N =2$.  Let $\{C'\} \cup \mathcal{C}$ be an almost even-cycle decomposition of $(G_x,\Sigma\cap E(G_x))$, where all cycles in $\mathcal{C}$ are even and $x \in V(C')$. Let $G''$ be the graph induced by $E(G') \setminus (E(G_x)\setminus E(C'))$.  Observe that $G''$ is a subdivision of $K_5+2e$, which is strongly even-cycle decomposable by Lemma~\ref{K5+}.

\medskip
Suppose $H$ is a triangle $abc$.  By hypthosis, $\abs{N} \geq 4$.  Let $\{C'\} \cup \mathcal{C}$ be
an almost even-cycle decomposition of $(G_a,\Sigma\cap E(G_a))$, where all cycles in $\mathcal{C}$ are even and $a \in V(C')$. Let $G''$ be the graph obtained from $G'\setminus (E(G_a)\setminus E(C'))$ by deleting isolated vertices and suppressing degree-$2$ vertices not in $N$.
It is enough to prove that $G''$ is strongly even-cycle decomposable. Notice that $G''-b-c$ is a simple graph with exactly one cycle and $\abs{V(G''-b-c)} \geq 5$. Since $G''$ is the join of $K_2$ and $G''-b-c$, $G''$ is strongly even-cycle decomposable by Lemma~\ref{cliquejoin}.
\end{proof}

\section{Substituting twin vertices} \label{sec:twinsub}

If $u$ and $v$ are twins of $G$, and $H$ is a simple graph, then
the \emph{twin substitution of $\{u,v\}$ by $H$ in $G$} is the graph obtained
from the disjoint union of $G - u-v$ and $H$ by 
adding an edge $xy$ for all $x\in V(H)$ and all neighbors $y$ of $v$.

\begin{THM} \label{twin}
    Let $G$ be a simple strongly even-cycle decomposable graph and let $u$ and $v$ be non-isolated, non-adjacent twin vertices of $G$. If $H$ is a simple Eulerian graph with an even number of vertices, then the twin substitution of $\{u,v\}$ by $H$ in $G$ is also strongly even-cycle decomposable, provided that $H$ is not a co-claw or $\deg_G(v)\ge 4$.
\end{THM}
\begin{proof}
    We proceed by induction on $\abs{E(H)}$. If $H$ has an isolated vertex, then it follows easily from Theorem~\ref{sub}. Thus, we may assume that $H$ has no isolated vertices.

    Let $G'$ be the twin substitution of $\{u,v\}$ by $H$ in $G$. We may assume that $G$ has no isolated vertices and $\abs{V(H)}\ge 4$.  Let $N$ be the set of neighbors of $v$ in $G$.  Let $\Sigma$ be a signing of $E(G')$ with $\abs{\Sigma}$ even. If $(H,E(H)\cap \Sigma)$ has an even cycle $C$, then we apply the induction hypothesis with $H'=H\setminus E(C)$.  Thus we may assume that $(H,E(H)\cap \Sigma)$ has no even cycles. Since $H$ is Eulerian, every block of $H$ is an odd cycle.
    
    For each leaf block $B$ of $H$, we define $W_B$ to be the set of non-cut vertices of $H$ contained in $V(B)$. For distinct vertices $x,y\in W_B$, we define $G'(x,y)$ to be a subgraph of $G'$ that is the union of $B$ together with all edges between $\{x,y\}$ and $N$. We let $\overline {G'} (x,y)=G'\setminus E(G'(x,y))-x-y$. Observe  $G'(x,y)$ is a subdivision of the join of a $2$-cycle and $\overline{K_{\abs N}}$. By Lemma~\ref{2-cyclejoin}, $G'(x,y)$ is strongly even-cycle decomposable. Thus we may assume for all such choices $x$, $y$ of $B$, $\abs{E(G'(x,y))\cap \Sigma}$ is odd. 
    
    If $H$ has at least two odd cycles, then let $B_1$, $B_2$ be distinct leaf blocks of $H$ and let $a$, $b$, $c$, $d$ be distinct vertices with $a,b\in W_{B_1}$ and $c,d\in W_{B_2}$. Let $G'(a,b,c,d)=G'(a,b)\cup G'(c,d)$ and let $\overline{G'}(a,b,c,d)=G\setminus E(G'(a,b,c,d))-a-b-c-d$. Since $\abs{E(G'(a,b))\cap \Sigma}$ and $\abs{E(G'(c,d))\cap\Sigma}$ are both odd, $\abs{E(G'(a,b,c,d))\cap \Sigma}$ is even. Note that $G'(a,b,c,d)$ is a subdivision of a graph that is strongly even-cycle decomposable by Lemma~\ref{2-cyclejoin} or Lemma~\ref{bowtiejoin}. Furthermore, $\overline{G'}(a,b,c,d)$ is strongly even-cycle decomposable by the induction hypothesis. Thus, we may assume  $(H,\Sigma\cap E(H))$ is an odd cycle of even length.

Let $x$ and $y$ be two non-adjacent vertices in $H$.
Let $G_1=G'-(V(H)\setminus\{x,y\})$ and $G_2=G'\setminus E(G_1)\setminus E(H)$. Note that $G_1$ is isomorphic to $G$, and $E(G')=E(G_1)\cup E(G_2)\cup E(H)$.

Suppose  $\abs{E(G_1)\cap \Sigma}$ is even and hence  $\abs{E(G_2)\cap \Sigma}$ is
odd. By Lemma~\ref{Km,nsign}, there exists an odd $4$-cycle $C$ in
$(G_2,E(G_2)\cap \Sigma)$. Then $C\cup H$ can be decomposed into two even cycles, and by Lemma~\ref{Km,nC4},
$G_2\setminus E(C)$ is strongly even-cycle decomposable.

Hence $\abs{E(G_1)\cap \Sigma}$ is odd.
Thus there is an almost
even-cycle decomposition $\mathcal{C}$ of $G_1$ such that $x$ belongs
to an odd cycle $C_1\in \mathcal{C}$.

If  $\abs{V(H)}\ge 6$, then let $F$ be the set of edges from $\{x,y\}$ to $N$. 
As  $\abs{G'(x,y)\cap \Sigma}$ is odd, $\abs{F\cap \Sigma}$ is even and so $F$ is even-cycle decomposable by Lemma~\ref{basebasecase}. The remaining edges are even-cycle decomposable by the induction hypothesis applied to the graph obtained from $H$ by suppressing $x$ and $y$. Therefore we may assume  $\abs{V(H)}=4$ and $H$ is a $4$-cycle.

Suppose $\abs N\geq 4$. 
Let $G_1'$ be the spanning subgraph of $G_1$ on $V(G_1)$ whose edge set is precisely $E(C_1)$. Let $G_1''$ be the graph obtained from $G_1'$ by deleting all isolated vertices and suppressing all degree-$2$ vertices not in $N\cup\{x,y\}$ so that $V(G_1'')=N\cup\{x,y\}$.
Let $G''$ be the graph induced by $E(G')\setminus (E(G_1)\setminus E(C_1))$. 
Since $G_1''$ is a simple graph with exactly one cycle and $\abs{V(G_1'')}\geq 6$, $G''$ is strongly even-cycle decomposable by Lemma~\ref{apex} because $G''$ is a subdivision of the join of $\overline{K_2}$ and $G_1''$.

Thus we may assume that $\abs N=2$.  Let $C_2$ be the cycle in $\mathcal{C}$ containing $y$
(possibly $C_1=C_2$). Then $C_1\cup C_2$ can be decomposed into
two $xy$-paths, and thus $C_1\cup C_2\cup H$ can be decomposed 
into two even cycles. 
\end{proof}

\begin{THM}\label{thm:twin2}
    Let $G$ be a simple strongly even-cycle decomposable graph and let $u$ and $v$ be adjacent twin vertices of $G$ having degree at least $4$. If $H$ is a simple anti-Eulerian graph, then the twin substitution of $\{u,v\}$ by $H$ in $G$ is also strongly even-cycle decomposable.
\end{THM}
\begin{proof}
    Let $G'$ be the twin substitution of $\{u,v\}$ by $H$ in $G$.
    We proceed by induction on $\abs{E(H)}$. Let $\Sigma$ be a signing of $E(G')$ with $\abs\Sigma$ even. Let $N=N_G(v) \setminus \{u\}$. By assumption, $\abs{N}\ge 3$ and $\abs{N}$ is odd.

    If $(H,E(H)\cap \Sigma)$ has an even cycle $C$, then we apply the induction hypothesis to $G$ and $H\setminus E(C)$. Thus, we may assume that $(H,E(H)\cap \Sigma)$ has no even cycle. Therefore every block of $H$ is either an odd cycle or a $K_2$. 
    
    If $B$ is a leaf block of $H$, then $B=K_2$ because $H$ is anti-Eulerian. Since $B$ has at most one cut vertex, each leaf block has at least one vertex of degree $1$. Conversely, every vertex of degree $1$ in $H$ is in a leaf block.
    
    If $H$ has an odd cycle $C$, then there exist vertices $x$, $y$ of degree $1$ in $H$ such that there are two paths $P_1$ and $P_2$ from $x$ to $y$ having different parity. For $i=1,2$, let $G_i$ be the subgraph of $G$ induced by all edges between $\{x,y\}$ and $N$ and all edges in $P_i$. Then there exists $i\in \{1,2\}$ such that $\abs{E(G_i)\cap \Sigma}$ is even. Since $G_i$ is a subdivision of the join of $\overline{K_{\abs N}}$ with $K_2$, $G_i$ is strongly even-cycle decomposable by Lemma~\ref{cliquejoin}. It remains to show that $G'\setminus E(G_i)$ is strongly even-cycle decomposable, which is implied by the induction hypothesis with $G$ and $H\setminus E(P_i)-x-y$. 
    
    Therefore, we may assume that $H$ has no cycles. Suppose that $H$ has a component with three leaves $v_1$, $v_2$, $v_3$. For each $1\le i<j\le 3$, let $P_{ij}$ be the unique path from $v_i$ to $v_j$ in $H$. Let $G_{ij}$ be the subgraph of $G$ induced by all edges between $\{v_i,v_j\}$ and $N$ and all edges in $P_{ij}$. Since $E(G_{12})\Delta E(G_{23})\Delta E(G_{31})=\emptyset$, there exist $1\le i<j\le 3$ such that $\abs{E(G_{ij})\cap \Sigma}$ is even.
    As before, $G_{ij}$ is strongly even-cycle decomposable by Lemma~\ref{cliquejoin} and $G'\setminus E(G_{ij})$ is strongly even-cycle decomposable by the induction hypothesis. 
    
    Thus, we may assume that each component of $H$ is isomorphic to $K_2$. Let $H_1,\ldots, H_m$ be the components of $H$. If $m=1$, then $G'$ is isomorphic to $G$. So, we may assume $m \geq 2$.
    For each $1\le i\le m$, let $G_i'=G'- (V(H)\setminus V(H_i))$ and let $G_i''$ be the subgraph of $G'$ induced by $H_i$ and all edges incident with a vertex in $H_i$. Observe that $G_i''$ is isomorphic to the join of $K_{2}$ and $\overline{K_{\abs{N}}}$, which
    is strongly even-cycle decomposable by Lemma~\ref{cliquejoin}. If $\abs{E(G_i'')\cap \Sigma}$ is even, then we are done since $G'\setminus E(G_i'')$ is strongly even-cycle decomposable by the induction hypothesis.
    
    Thus, we may assume that $\abs{E(G_i'')\cap \Sigma}$ is odd for all $1\le i\le m$. Then $\abs{E(G_1''\cup G_2'')\cap \Sigma}$ is even. Since $K_7-C_4-C_3$ is strongly even-cycle decomposable by Lemma~\ref{lem:k7c4c3} and $K_7-C_4-C_3$ is the join of $\overline{K_{3}}$ with $2K_2$, $G_1''\cup G_2''$ is strongly even-cycle decomposable by Lemma~\ref{lem:parityreduction}. If $m>2$, then $G'\setminus E(G_1''\cup G_2'')$ is strongly even-cycle decomposable by the induction hypothesis and therefore $(G',\Sigma)$ is even-cycle decomposable. 
    
    Hence, we may assume that $m=2$.   Then $E(G_1')\cup E(G_2'')=E(G')$ and $E(G_1')\cap E(G_2'')=\emptyset$ and so $\abs{E(G_1')\cap \Sigma}$ is odd. 
    
    \begin{CLAIM} \label{fix}
    If $(G_1',  E(G_1') \cap \Sigma)$ has an almost even-cycle decomposition $\mathcal{C}$ such that the unique odd cycle $C \in \mathcal{C}$ uses at 
    least two vertices of $N$, then $(G',\Sigma)$ is even-cycle decomposable. 
    \end{CLAIM}
    
    \begin{subproof}[Subproof]
    Let $\mathcal{C}$ be an almost even-cycle decomposition of $(G_1',  E(G_1') \cap \Sigma)$  such that the unique odd cycle $C \in \mathcal{C}$ uses at 
    least two vertices of $N$. Let $G_1^*=(V(G_1'),E(C))$ be the spanning subgraph of $G_1'$ whose edge set is $E(C)$. It is enough to show that $G_1^*\cup G_2''$ is strongly even-cycle decomposable. 
    
    Let $G_1^{**}$ be the graph obtained from $G_1^*$ by removing isolated vertices not in $N$ and suppressing all vertices of degree $2$ not in $N$. Observe $G_1^{**}$ has exactly one cycle, $V(G_1^{**})=N$, $V(G_2'')=N \cup V(H_2)$, and  $G_1^*\cup G_2''$ is strongly even-cycle decomposable if $G_1^{**}\cup G_2''$ is strongly even-cycle decomposable. Also note $G_1^{**}\cup G_2''$ is the join of $G_1^{**}$ and $K_2$.  
    
    If $G_1^{**}$ does not have a $2$-cycle, then $G_1^{**}\cup G_2''$ is strongly-even cycle decomposable by Lemma~\ref{cliquejoin}. Thus, we may assume that $G_1^{**}$ has a $2$-cycle.  By subdividing the edge in $H_2$, we conclude that $G_1^{**}\cup G_2''$ is strongly even-cycle decomposable by Lemma~\ref{2-cyclejoin}.
    \end{subproof}
    
    Let $e=ab$ be the unique edge in $H_1$ and let $\mathcal{C}$ be an almost even-cycle decomposition of $(G_1',  E(G_1') \cap \Sigma)$ such that $e$ belongs to the unique odd cycle $C_1 \in \mathcal{C}$.  If $C_1$ uses at least two vertices of $N$, then we are done by Claim \ref{fix}.  So, we may assume that $C_1=abc$, with $c \in N$.  Let $C_2 \neq C_1$ be a cycle in $\mathcal{C}$ such that $a \in V(C_2)$. Note that $|E(C_1 \cup C_2) \cap \Sigma|$ is odd, since $C_2$ is even.  
    
    If $c \in V(C_2)$, then $C_1 \cup C_2$ can be decomposed into two cycles $C_1'$ and $C_2'$ such that $C_1'$ and $C_2'$ both use at least two vertices of $N$.  One of $C_1'$ and $C_2'$ is odd and the other is even, so we are done by Claim \ref{fix}. 
    
    Suppose $c \notin V(C_2)$ and $b \in V(C_2)$.  If $C_2$ is not a $4$-cycle, then $C_1 \cup C_2$ can be decomposed into two cycles $C_1'$ and $C_2'$ such that $C_1'$ and $C_2'$ both use at least two vertices of $N$, so we are again done by Claim \ref{fix}.  We may thus assume that $C_2$ is a $4$-cycle. Let
    $f=a'b'$ be the unique edge in $H_2$.  Consider $G_2^*=G_2'' \cup E(C_1) \cup E(C_2)$.  It suffices to show that $G_2^*$ is strongly even-cycle decomposable.  Let $G_2^{**}$ be the graph obtained from $G_2^*$ by removing isolated vertices and suppressing degree-$2$ vertices.  Note that there are $\abs{N}-2$ parallel edges between $a'$ and $b'$ in $G_2^{**}$.  Let $G_2^{***}$ be the graph obtained from $G_2^{**}$ by deleting $\abs{N}-3$ edges between $a'$ and $b'$.  By Lemma \ref{parallel} it suffices to show that $G_2^{***}$ is strongly even-cycle decomposable.  But $G_2^{***}=K_7-C_4-C_3$, so we are done by Lemma \ref{lem:k7c4c3}.

    The remaining case is  $b \notin V(C_2)$ and $c \notin V(C_2)$.  Again let $G_2^*=G_2'' \cup E(C_1) \cup E(C_2)$ and $f=a'b'$ be the unique edge in $H_2$. 
     Let $G_2^{**}$ be the graph obtained from $G_2^*$ by removing isolated vertices and suppressing degree-2 vertices.  It suffices to show that $G_2^{**}$ is strongly even-cycle decomposable.  Let $N^{**}$ be the vertices of $N$ that have not been suppressed. Let $\Sigma^{**}$ be a signing of $E(G_2^{**})$ with $|\Sigma^{**}|$ even.  By removing even $2$-cycles between $a'$ and $b'$, we may assume that there are $k$ edges between $a'$ and $b'$, where $k \in \{1,2\}$.  By removing even $4$-cycles between $\{a',b'\}$ and $N^{**} \setminus \{c\}$,
    and then suppressing degree-2 vertices, we may assume that $|N^{**}| \in \{2,3\}$.  Since $G_2^{**}$ is Eulerian, the only possibilities are $k=1$ and $|N^{**}|=3$ or $k=2$, and  $|N^{**}| =2$, see Figure~\ref{fig:last2}.

    \begin{figure}
\centering
      \begin{tikzpicture}[scale=.75,    every fit/.style={draw,fill=blue!15,ellipse,text width=35pt}]
        \tikzstyle{v}=[circle,draw,fill=black!50,inner sep=0pt,minimum width=4pt]
        \node[v,label=left:$a$] at(0,0) (v0){};
        \node[v] at(-60:1) (v1){};
        \node[v] at(-120:1) (v2){};
        \node[v,label=left:$c$] at (0,1) (v3){};
        \draw (v0)--(v1)--(v2)--(v0)[bend right=20] to (v3) [bend right=20] to (v0);
        \node[v,label=right:$b'$]  at (2,-.5) (w1){};
        \node[v,label=right:$a'$]  at (2,.2) (w2){};
        \draw (w1) -- (w2);
        \foreach \x in {1,2,3}{
        \foreach \y in {1,2} {
        \draw (v\x)--(w\y);
        }}
    \end{tikzpicture}$\quad$  
    \begin{tikzpicture}[scale=.75,    every fit/.style={draw,fill=blue!15,ellipse,text width=35pt}]
        \tikzstyle{v}=[circle,draw,fill=black!50,inner sep=0pt,minimum width=4pt]
        \node[v,label=left:$a$] at(0,0) (v0){};
        \node[v,label=$c$] at(0,1) (v1){};
        \node[v] at (0,-1) (v3){};
        \draw (v0)[bend right=20] to (v1)[bend right=20]to 
            (v0)[bend right=20] to (v3) [bend right=20] to (v0);
        \node[v,label=right:$b'$]  at (2,-.5) (w1){};
        \node[v,label=right:$a'$]  at (2,.2) (w2){};
        \draw (w1)[bend right=20] to (w2)[bend right=20]to 
            (w1);
        \foreach \x in {1,3}{
        \foreach \y in {1,2} {
        \draw (v\x)--(w\y);
        }}
    \end{tikzpicture}
\caption{Two remaining possibilities in the proof of Theorem~\ref{thm:twin2}.}\label{fig:last2}
\end{figure}
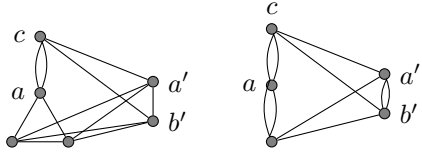

    Suppose $k=2$, and  $|N^{**}| =2$.  If the $2$-cycle $a'b'a'$ is even, then we can remove it and apply Lemma~\ref{bowtiejoin}.  So, we may assume that the
    $2$-cycle $a'b'a'$ is odd.  Observe that $E(G^{**})$ decomposes into two $5$-cycles.  We are thus done, since we can ensure that both these $5$-cycles are even because of the odd $2$-cycle $a'b'a'$.  
    
    Suppose $k=1$ and $|N^{**}|=3$.  If the $2$-cycle $aca$ is even, then by removing it we get a graph which is a subdivision of the join of two $2$-cycles.  Clearly, the join of two $2$-cycles is strongly even-cycle decomposable, so we may assume that $aca$ is odd.   Observe that $E(G^{**})$ decomposes into two $6$-cycles.  We are thus done, since we can ensure that both these $6$-cycles are even because of the odd $2$-cycle $aca$.  
\end{proof}

\section{Joins} \label{sec:join}
The main result of this section is that if $G$ is a simple Eulerian graph that is a join of two graphs $G_1$ and $G_2$ with $\abs{V(G_1)}, \abs{V(G_2)} \geq 2$, then $G$ is strongly even-cycle decomposable if and only if $G$ is neither $K_5$ nor $K_5$ with an edge subdivided.  

\begin{LEM} \label{lem:join-even}
Let $G_1$ and $G_2$ be simple Eulerian graphs with an even number of vertices.  Then the join of $G_1$ and $G_2$ is strongly even-cycle decomposable if and only if it is not $K_5$ with an edge subdivided. 
\end{LEM}
\begin{proof}
    The forward direction is obvious.  For the converse, let $G$ be the join of $G_1$ and $G_2$ and assume $G$ is not $K_5$ with an edge subdivided.
    If both $G_1$ and $G_2$ are either $\overline{K_2}$ or a co-claw, then $G$ is $K_{2,2}$ or the join of two co-claws, both of which are strongly even-cycle decomposable (the latter by Lemma~\ref{co-claw}). Thus we may assume that $G_1$ is neither $\overline{K_2}$ nor a co-claw. In particular, $\abs{V(G_1)}\ge 4$.  
Let $X=\{a,b\}$ and $Y=\{c,d\}$ be a bipartition of $K_{2,2}$.  Let $H$ be the twin substitution of
 $\{a,b\}$ by $G_1$ in $K_{2,2}$. Since $G_1$ is not a co-claw, $H$ is strongly even-cycle decomposable by Theorem~\ref{twin}.  Then $G$ is isomorphic to the twin substitution of
$\{c,d\}$ by $G_2$ in $H$, which  is strongly even-cycle decomposable by  Theorem~\ref{twin} because $\deg_H(c)\ge 4$.
\end{proof}

We also prove a variant of our previous lemma for anti-Eulerian graphs. 

\begin{LEM}\label{lem:join-odd}
Let $G_1$ be a simple Eulerian graph with $\abs{V(G_1)} \geq 3$ and $\abs{V(G_1)}$ odd.  Let $G_2$ be a simple anti-Eulerian graph with $\abs{V(G_2)} \geq 2$.  Then the join of $G_1$ and $G_2$ is strongly even-cycle decomposable if and only if $G_1 \neq K_3$ or $G_2 \neq K_2$.  
\end{LEM}

\begin{proof}
    The forward direction is obvious since the join of $K_3$ and $K_2$ is $K_5$.  For the converse, 
    let $G$ be the join of $G_1$ and $G_2$, where $G_1 \neq K_3$ or $G_2 \neq K_2$.  First suppose that $G_1 \neq K_3$. Let $G_3$ be the join of $G_1$ and $K_2$. Let $V(K_3)=\{a,b,c\}$. Since $G_3$ is isomorphic to the substitution of $a$ by $G_1$ in $K_3$, $G_3$ is strongly even-cycle decomposable by Theorem~\ref{sub}. Since $G$ is isomorphic to the twin substitution of $\{b,c\}$ by $G_2$ in $G_3$, $G$ is strongly even-cycle decomposable by Theorem~\ref{thm:twin2}.

    Hence we may assume that $G_1=K_3$ and $G_2\neq K_2$. In particular, $\abs{V(G_2)}\ge 4$. 
    Let $v\in V(G_1)$, $G_1'=G-(V(K_3)\setminus\{v\})$, and $G_2'=G_1-v$. Then $G$ is isomorphic to the join of $G_1'$ and $G_2'$,
    where $G_1' \neq K_3$ since $\abs{V(G_1')}\ge 5$. By the previous argument with $(G_1, G_2)$ replaced by $(G_1', G_2')$, we conclude that $G$ is strongly even-cycle decomposable. 
\end{proof}

Combining the previous two lemmas, we obtain the main result of this section. 

\begin{THM}\label{thm:join}
    Let $G$ be a simple Eulerian graph that is a join of two graphs each having at least two vertices. Then $G$ is strongly even-cycle decomposable if and only if $G$ is neither $K_5$ nor $K_5$ with an edge subdivided.
\end{THM}
\begin{proof}
    The forward direction is trivial. For the converse, suppose that $G$ is the join of two graphs $G_1$ and $G_2$ with $\abs{V(G_1)}, \abs{V(G_2)}\ge 2$. If both $G_1$ and $G_2$ have an even number of vertices, then both are Eulerian and we apply Lemma~\ref{lem:join-even}. 
    If $\abs{V(G_1)}$ is even and $\abs{V(G_2)}$ is odd, then $G_1$ is anti-Eulerian and $G_2$ is Eulerian and so  we apply Lemma~\ref{lem:join-odd}.
    If both $G_1$ and $G_2$ have an odd number of vertices, then $G_1$ is anti-Eulerian, contradicting the fact that every anti-Eulerian graph has an even number of vertices.
\end{proof}

\section{Cographs} \label{sec:cographs}

As promised, we finish our paper with the following characterization of strongly even-cycle decomposable cographs. 

\begin{COR}\label{cor:cographs}
    Let $G$ be a cograph with no isolated vertices. Then $G$ is strongly even-cycle decomposable if and only if $G$ is $2$-connected Eulerian and $G$ is neither $K_5$ nor $K_5$ with an edge subdivided.
\end{COR}

\begin{proof}
    The forward direction is trivial. Let us prove the converse.
It is well known that if $G$ is a cograph with at least two vertices, then either $G$ or its complement is disconnected~\cite{CLB1981}.
We may assume that $G$ has at least $4$ vertices, as  $K_3$ is trivially strongly even-cycle decomposable.
Since $G$ is connected, there exists a partition $(L,R)$ of $V(G)$ such that all edges are present between $L$ and $R$.  Note that
$\abs{L}$ and $\abs{R}$ cannot both be odd.  We choose $\abs{R}\ge 2$ and $\abs{L}$ maximum. If $\abs{L}\ge 2$, then we are done by Theorem~\ref{thm:join}. Suppose $\abs{L}=1$. Then because $G$ is $2$-connected, $G[R]$ is a connected cograph and so $G[R]$ is the join of two graphs $G_1$ and $G_2$ 
with $\abs{V(G_1)} \leq \abs{V(G_2)}$.  But now $(L \cup V(G_1), V(G_2))$ contradicts the choice of $(L,R)$.  
\end{proof}

%\bibliographystyle{abbrv}
%\bibliography{references-ecd}

\end{document}